\journal{Journal of Algebra}
\newtheorem{theorem}{Theorem}[section]
\newtheorem{lemma}[theorem]{Lemma}
\newtheorem{prop}[theorem]{Proposition}
\newtheorem{cor}[theorem]{Corollary}
\newenvironment{proof}{{\flushleft\it Proof.}}{\hfill$\square$ \\}
\numberwithin{equation}{section}
\def\beq{\begin{equation}}
\def\eeq{\end{equation}}
\def\beqs{\begin{equation*}}
\def\eeqs{\end{equation*}}
\def\A{\mathcal{A\ts}}
\def\CC{\mathbb{C}}
\def\bt{\ts\,\raise-0.5pt\hbox{\small$\boxtimes$}\,\,}
\def\d{\partial}
\def\de{\delta}
\def\E{\mathcal{E}}
\def\End{\operatorname{End}\ts}
\def\ep{\varepsilon}
\def\F{\mathcal{F}}
\def\ge{\geqslant}
\def\gl{\mathfrak{gl}}
\def\glhat{\widehat{\mathfrak{gl}}}
\def\H{\mathfrak{H}}
\def\id{{\rm id}}
\def\Ind{\operatorname{Ind}}
\def\io{\iota}
\def\ka{\kappa}
\def\lcd{\ts,\ldots,}
\def\le{\leqslant}
\def\om{\omega}
\def\op{\oplus}
\def\ot{\otimes}
\def\p{\mathfrak{p}}
\def\q{\mathfrak{q}}
\def\R{\mathfrak{D}}
\def\si{\sigma}
\def\sl{\mathfrak{sl}}
\def\slhat{\widehat{\mathfrak{sl}}}
\def\Sym{\mathfrak{S}}
\def\T{\mathfrak{C}}
\def\th{\theta}
\def\ts{\hskip.75pt}
\def\tts{\hskip.5pt}
\def\U{\mathrm{U}}
\def\ZZ{\mathbb{Z}}
\begin{document}

\begin{frontmatter}

\title{On the functor of Arakawa, Suzuki and Tsuchiya}
 
\author[a,b]{Sergey Khoroshkin\,}
\address[a]{Institute for Theoretical and Experimental Physics, 
Moscow 117218, Russia}
\address[b]{National Research University Higher School of Economics, 
Moscow 101000, Russia}

\author[c]{Maxim Nazarov\,}
\address[c]{
University of York, 
York YO10 5DD, England
\\[20pt]
{\rm\normalsize 
To Professor Masatoshi Noumi on the occasion of his sixtieth birthday}
\\[-6pt]
}



\end{frontmatter}

\thispagestyle{empty}


\vspace{0pt}
\section*{Introduction}


\vspace{8pt}\noindent
The main aim 
of this article is to give detailed proofs
of basic properties of a certain functor introduced by Arakawa, Suzuki and
Tsuchiya in \cite{AST} and further studied by Suzuki in \cite{S2}. 
We denote this functor by $\A_N$ where
$N$ is any positive integer.
In our setting the functor $\A_N$
is applied to any module $V$ of the affine Lie algebra
$\slhat_m$ such that 
for any given vector in $V$, 
there is a degree $i$ such 
that the subspace $t^{\,i}\,\sl_m[t]\subset\slhat_m$ 
annihilates this vector.
Here $m$ is another positive integer and $t\,$ is a variable.
In the present article the Lie algebra 
$\slhat_m$ is regarded as a central
extension of the current Lie algebra
$\sl_m\ts[\,t,t^{-1}\ts]$ by a one-dimensional vector space
with a fixed basis element, which will be denoted by $C\,$. 

As a vector space, $\A_N(V)$ 
is the tensor product of $V$ with $N$ copies of the vector space
$\CC^{\ts m}[\ts t\ts,t^{-1}\ts]\,$.
The latter space 
can be regarded as 
a module of $\slhat_m$
where the central element $C$ acts as zero. Hence
$\A_N(V)$ is also a module of $\slhat_m\,$. 
It was proved in \cite{AST} that
the vector space $\A_N(V)$ comes with an action of
the trigonometric Cherednik algebra \cite{C1,C2,EG}.
We denote this algebra by $\T_N\,$.
The complex associative algebra $\T_N$ is generated by 
the symmetric group ring $\CC\ts\Sym_N\,$, 
the ring of Laurent polynomials in 
$N$ variables $x_1\lcd x_N$ and by another family of 
commuting elements $u_1\lcd u_N\,$.
The subalgebra of $\T_N$ generated by the first two rings is
the crossed product 
$\Sym_N\ltimes\CC[\ts x_1\ts,x_1^{-1}\lcd x_N\ts,x_N^{-1}\ts]$
where the 
group $\Sym_N$ permutes the $N$ variables. 
The subalgebra of $\T_N$ generated by 
$\Sym_N$ and $u_1\lcd u_N$ is the degenerate affine Hecke algebra
$\H_N$ introduced by Drinfeld \cite{D1} and Lusztig \cite{L}.
The other defining relations in $\T_N$ are 
\eqref{cross5},\eqref{cross6},\eqref{cross7}.
In particular, the algebra $\T_N$ depends on a parameter $\ka\in\CC\,$.

The action of the trigonometric Cherednik algebra $\T_N$ and that of
the affine Lie algebra $\slhat_m$ on the vector space $\A_N(V)$ 
do \emph{not} commute in general. However, let us 
suppose that
the 
element $C\in\slhat_m$ acts on $V$ as multiplication by the scalar $\ka-m\,$.
In other words, 
suppose that
the $\slhat_m\ts$-module $V$ is of level $\ka-m\,$.
Then the action of $\T_N$ on $\A_N(V)$ preserves the image
of the action of the subalgebra 
$t^{-1}\,\sl_m\ts[\,t^{-1}\ts]\subset\slhat_m\,$.
Hence the quotient vector space of $\A_N(V)$ by the image
inherits an action of 
$\T_N\,$.
This remarkable property of the functor $\A_N$ was also proved
in \cite{AST}. 
We give a more detailed 
proof of this property of $\A_N$ 
by following an approach of 
\cite{S2}.
At the same time we make Theorem 4.1 from \cite{S2} more precise.

\newpage

We will regard $\slhat_m$ as a subalgebra of the affine Lie algebra 
$\glhat_m\,$. The latter is a central extension of
the current Lie algebra
$\gl_m\ts[\,t,t^{-1}\ts]$ by the one-dimensional vector space
with the basis element $C\ts$. By definition, this one-dimensional 
vector space is contained
in the subalgebra $\slhat_m\subset\glhat_m\,$.
We start with any $\glhat_m\ts$-module 
such that for any given vector of the module, 
there is a degree $i$ such 
that the subspace $t^{\,i}\,\gl_m[t]\subset\glhat_m$ 
annihilates the vector.
Following \cite{S2}, we define an action of
the algebra $\T_N$ on the tensor product of this module
with $N$ copies of $\CC^{\ts m}[\ts t\ts,t^{-1}\ts]\,$.
These $N$ tensor factors are regarded as $\glhat_m\ts$-modules.
We prove that if the central element
$C$ acts on the initial $\glhat_m\ts$-module
as multiplication by $\ka-m\,$, 
then the action of $\T_N$ on the tensor product
preserves the image of the action of the subalgebra 
$t^{-1}\,\sl_m\ts[\,t^{-1}\ts]\subset\glhat_m\,$.
By modifying the action of $\T_N\ts$, 
we derive the above stated property of the functor 
$\A_N\,$; see our Section~3.

The functor $\A_N$ can be considered as the affine Lie algebra version
of another functor, first introduced by Cherednik \cite{C2} and further
studied in \cite{AST}. We denote this functor by $\F_N\,$,
and apply it to any module $U$ of the finite-dimensional 
Lie algebra $\sl_m\,$. As a vector space, $\F_N(U)$ 
is just the tensor product of $U$ with $N$ copies of the vector space
$\CC^{\ts m}$. By regarding the latter vector space as an
$\sl_m\ts$-module, we obtain an action of $\sl_m$ on $\F_N(U)\,$.
There is also an action of the degenerate affine Hecke algebra $\H_N$
on the vector space $\F_N(U)\,$, which commutes with the action of~$\sl_m\,$.
In particular, this construction was used by Suzuki \cite{S0} to prove a 
conjecture of Rogawski \cite{R} on the Jantzen filtration on the standard
$\H_N\ts$-modules.

In the present article we establish a connection between the functors
$\A_N$ and $\F_N\,$. For any $\sl_m\ts$-module $U$ we consider the
$\T_N\ts$-module induced from the $\H_N\ts$-module
$\F_N(U)\,$. Here $\H_N$ is regarded as a subalgebra of $\T_N\,$.
We demonstrate that for a certain $\slhat_m\ts$-module $V$,
the induced $\T_N\ts$-module is equivalent
to the quotient of $\A_N(V)$ by the image of the action of the 
subalgebra $t^{-1}\,\sl_m\ts[\,t^{-1}\ts]\subset\slhat_m\,$.
Namely $V$ is the $\slhat_m\ts$-module of level $\ka-m\,$,
parabolically induced from $U$ as a module 
over the subalgebra $\sl_m\subset\slhat_m\,$;
see our Section 2 for details.

We also consider the action of the algebra $\H_N$ on the tensor product of
any $\gl_m\ts$-module with $N$ copies of $\CC^{\ts m}$.
By modifying this action and regarding $\sl_m$ as a subalgebra of 
$\gl_m\,$, we get the above mentioned action of $\H_N$ on the 
space
$\F_N(U)$ for any $\sl_m\ts$-module $U$. 
Working with $\gl_m\ts$-modules also allows us to 
give an analogue of Theorem 2.1 from \cite{KN1},
where the role of $\H_N$ was played by
the Yangian of the Lie algebra $\gl_n$ with any $n\,$.
Namely, we consider the tensor product
of $N$ copies of $\CC^{\ts m}$ with a parabolically 
induced module of $\gl_m\,$.
Our Theorem \ref{1.25} describes the 
action of $\H_N$
on the quotient of the tensor product, taken relative
to the image of the action the nilpotent subalgebra of $\gl_m$
complementary to the parabolic subalgebra which the given module
of $\gl_m$ is induced from. The above mentioned result~of~\cite{KN1}
can be derived from our present Theorem \ref{1.25}
by using the Drinfeld functor~\cite{D1}.

The first named author was supported within the framework
of a subsidy granted to the Higher School of Economics  
by the Government of the Russian Federation to
implement the Global Competitiveness Program. He was also supported
by the RFBR grant 14-01-00547.
We are grateful to Tomoyuki Arakawa
and Takeshi Suzuki for very helpful conversations.
We dedicate this article to Masatoshi Noumi.
His works on the algebraic structures
arising from the theory of special functions have motivated
our interest in the results presented~here.  

\newpage


\section{Hecke algebras}
\label{sec:1}
\medskip


\subsection{}
\label{sec:11}

We begin with recalling a well known construction
from the representation theory of the 
\textit{degenerate affine Hecke algebra\/} $\H_N\ts$, which
corresponds to the general linear group $\mathrm{GL}_N$ over a local
non-Archimedean field. This algebra has been introduced by Drinfeld
[D2], see also the work of Lusztig [L]. 
By definition, the complex associative algebra $\H_N$ is
generated by the symmetric group algebra $\CC\ts\Sym_N$ and by the pairwise
commuting elements $u_1\lcd u_N$ with the cross relations for $p=1\lcd N-1$
and $q=1\lcd N$
\begin{align}
\label{cross1}
\si_{p}\,u_q&=u_q\,\si_{p}
\quad\text{for}\quad
q\neq p\ts,p+1\ts;
\\
\label{cross2}
\si_{p}\,u_p&=u_{p+1}\,\si_{p}-1\ts.
\end{align}
Here and in what follows $\si_p\in \Sym_N$ denotes the 
transposition of numbers $p$ and $p+1\ts$. 
More generally, $\si_{pq}\in \Sym_N$ will denote the 
transposition of the numbers $p$ and $q\ts$. The group algebra
$\CC\ts\Sym_N$ can be then regarded as a subalgebra in $\H_N\ts$.
Furhtermore, it follows from the defining relations of $\H_N$
that a homomorphism $\H_N\to\CC\ts\Sym_N\ts$, identical
on the subalgebra $\CC\ts\Sym_N\subset \H_N\ts$,
can be defined by the assignments
\begin{equation}
\label{evalhecke}
u_p\mapsto\si_{1p}+\ldots+\si_{p-1,p}
\quad\text{for}\quad
p=1\lcd N.
\end{equation}

We will also use the elements of the algebra $\H_N\ts$
\begin{equation}
\label{zp}
z_p=u_p-\si_{1p}-\ldots-\si_{p-1,p}
\quad\text{where}\quad
p=1\lcd N.
\end{equation}
Notice that $z_p\mapsto0$ under the homomorphism $\H_N\to\CC\ts\Sym_N\ts$
defined by \eqref{evalhecke}. For every permutation $\si\in\Sym_N$ we have
\begin{equation}
\label{yrel}
\si\,z_p\,\si^{-1}=z_{\ts\si(p)}\,.
\end{equation}
It suffices to verify \eqref{yrel} when
$\si=\si_q$ and $q=1\lcd N-1$. Then (\ref{yrel})
is equivalent to the relations \eqref{cross1},\eqref{cross2}.
The elements $z_1\lcd z_N$ do not commute, but satisfy
the commutation relations
\begin{equation}
\label{ycom}
[\,z_p\,,z_q\,]=\si_{pq}\,(z_p-z_q)\ts.
\end{equation}
Let us verify the equality in \eqref{ycom}.
Both sides of \eqref{ycom} are antisymmetric in $p$ and $q\ts$,
so it suffices to consider only the case when $p<q\ts$. Then
\begin{align*}
&[\,z_p\,,u_q\,]=[\,u_p-\si_{1p}-\ldots-\si_{p-1,p}\,,u_q\,]=0\ts,
\\[2pt]
&[\,z_p\,,z_q\,]=[\,z_p\,,z_q-u_q\,]=
-\ts[\,z_p\,,\si_{1q}+\ldots+\si_{q-1,q}\,]=
-\ts[\,z_p\,,\si_{pq}\,]=\si_{pq}\,(z_p-z_q)
\end{align*}
where we used \eqref{yrel}.
Obviously, the algebra $\H_N$ is generated by $\CC\ts\Sym_N$
and the elements $z_1\lcd z_N\ts$.
Together with relations in $\CC\ts\Sym_N$, 
\eqref{yrel} and \eqref{ycom} are defining relations for $\H_N\ts$.

The construction that we wish to recall now is due to Cherednik
\cite[Example 2.1]{C2}. It has been further developed by
Arakawa, Suzuki and Tsuchiya \cite[Subsection 5.3]{AST}. 
Let $U$ be any module over the complex general linear Lie algebra $\gl_m\ts$.
Let $E_{ab}\in\gl_m$ with $a,b=1\lcd m$ be the
standard matrix units. We will also regard the matrix units $E_{ab}$
as elements of the algebra $\End(\CC^{\ts m})$, this should not cause any 
confusion. Let us consider the tensor product $(\CC^{\ts m})^{\ot N}\ot\,U$
of $\gl_m\ts$-modules. Here each of the $N$ tensor factors $\CC^{\ts m}$ is
a copy of the natural $\gl_m$-module. We shall use the indices
$1\lcd N$ to label these $N$ tensor factors.
For any index $p=1\lcd N$
we will denote by $E^{\ts(p)}_{ab}$ the operator on the vector space 
$(\CC^{\ts m})^{\ot N}$ acting as
\begin{equation}
\label{epab}
\id^{\ts\ot\ts(p-1)}\ot E_{ab}\ot\id^{\ts\ot\ts(N-p)}\,.
\end{equation}

\begin{prop}
\label{1.1}
{\rm(i)}
Using the\/ $\gl_m\ts$-module structure of $U$,  
an action of the algebra $\H_N$ on the vector space 
$(\CC^{\ts m})^{\ot N}\ot\ts U$
is defined as follows: the symmetric 
group $\Sym_N\subset \H_N$ acts 
by permutations of the $N$ tensor factors\/ $\CC^{\ts m}$, and the element 
$z_p\in \H_N$ with $p=1\lcd N$~acts~as 
\begin{equation}
\label{yact}
\sum_{a,b=1}^m E_{ab}^{\ts(p)}\ot E_{\ts ba}\,.
\end{equation}
{\rm(ii)} 
This action of $\H_N$ commutes with the
{\rm(}diagonal\,{\rm)} action of\/ 
$\gl_m$ on $(\CC^{\ts m})^{\ot N}\ot\ts U\ts$.
\end{prop}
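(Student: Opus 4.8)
The plan is to verify the two assertions directly from the defining relations of $\H_N$ listed in Subsection~\ref{sec:11}, namely the relations \eqref{yrel} and \eqref{ycom} together with the relations inside $\CC\ts\Sym_N$, since the algebra $\H_N$ is generated by $\CC\ts\Sym_N$ and the elements $z_1\lcd z_N$. For part~(i), I would first observe that the assignment is forced on $\CC\ts\Sym_N$ and needs only be checked on the $z_p$. Write $\Omega_{pq}$ for the operator $\sum_{a,b}E^{\ts(p)}_{ab}\ot E^{\ts(q)}_{ba}$ on $(\CC^{\ts m})^{\ot N}$ (permutation of factors $p,q$) and $\Omega_{p0}$ for the operator \eqref{yact} acting in factors $p$ and $U$; thus $z_p$ is sent to $\Omega_{p0}$. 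One must check: (a) the $\Omega_{p0}$ are permuted correctly by $\Sym_N$, i.e. $\si\,\Omega_{p0}\,\si^{-1}=\Omega_{\si(p),0}$, which is immediate since conjugating by a permutation of the $\CC^{\ts m}$ factors just relabels the superscript $(p)$; and (b) the commutation relation $[\,\Omega_{p0}\,,\Omega_{q0}\,]=\Omega_{pq}\,(\Omega_{p0}-\Omega_{q0})$ for $p\neq q$, matching \eqref{ycom}. Relation (b) is the crux of the computation.

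For (b) the key tool is the classical identity for the Casimir-type tensor: on $\CC^{\ts m}\ot\CC^{\ts m}$ one has $\sum_{a,b}E_{ab}\ot E_{ba}=\Omega$, the flip operator, and more relevantly the $\gl_m$ identity $[\,E_{ab}\ot 1 + 1\ot E_{ab}\,,\,\sum_{c,d}E_{cd}\ot E_{dc}\,]=0$, i.e. the split Casimir is $\gl_m$-invariant. I would expand $[\,\Omega_{p0}\,,\Omega_{q0}\,]=\sum E^{\ts(p)}_{ab}E^{\ts(q)}_{cd}\ot[\,E_{ba}\,,E_{dc}\,]$ (the factors $E^{\ts(p)}$ and $E^{\ts(q)}$ commute as $p\neq q$), substitute $[\,E_{ba}\,,E_{dc}\,]=\de_{ad}E_{bc}-\de_{bc}E_{da}$, and resum. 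Each of the two resulting sums collapses, after relabelling indices, to an expression of the form $\Omega_{pq}\,\Omega_{p0}$ or $\Omega_{pq}\,\Omega_{q0}$ using the fact that $\sum_{a}E^{\ts(p)}_{ab}\,E^{\ts(q)}_{ca}=E^{\ts(q)}_{cb}\,\Omega_{pq}=\Omega_{pq}\,E^{\ts(p)}_{cb}$ (moving a matrix unit across a transposition), giving exactly $\Omega_{pq}(\Omega_{p0}-\Omega_{q0})$ after accounting for signs. This is a finite, purely combinatorial manipulation of matrix units; the only care needed is bookkeeping of which tensor slot each index sits in. Once (a) and (b) hold, the universal property of $\H_N$ via the presentation by $\CC\ts\Sym_N$, \eqref{yrel} and \eqref{ycom} yields a well-defined algebra action, proving~(i).

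For part~(ii), I would show that each generator of the $\H_N$-action commutes with the diagonal $\gl_m$-action. The symmetric group acts by permuting the $\CC^{\ts m}$ factors and hence commutes with the diagonal action $X\mapsto\sum_{p=1}^N X^{\ts(p)}+X^{(0)}$ of any $X\in\gl_m$ (here $X^{(0)}$ denotes the action on $U$), since permuting tensor factors intertwines the diagonal embedding. For the element $z_p$ acting as $\Omega_{p0}$, the bracket with the diagonal $E_{cd}$ is $\sum_{a,b}\big([\,E^{\ts(p)}_{cd},E^{\ts(p)}_{ab}\,]\ot E_{ba}+E^{\ts(p)}_{ab}\ot[\,E^{(0)}_{cd},E^{(0)}_{ba}\,]\big)$; the contributions from the other factors vanish because $\Omega_{p0}$ is supported only in slots $p$ and $0$. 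Expanding the two brackets by the $\gl_m$ structure constants and resumming exhibits a telescoping cancellation — this is precisely the statement that the split Casimir element $\sum_{a,b}E_{ab}\ot E_{ba}\in\U(\gl_m)^{\ot 2}$ is invariant under the coproduct, a standard fact I would invoke or verify in one line. Hence the $z_p$, and therefore all of $\H_N$, commute with the diagonal $\gl_m$. The main obstacle throughout is simply organizing the index juggling in the proof of relation (b) of part~(i); everything else is routine.
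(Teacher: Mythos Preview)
Your proposal is correct and follows exactly the approach the paper indicates: the paper states that to prove the proposition one only needs to verify the commutation relation \eqref{ycom} for the operators \eqref{yact}, referring to \cite{KN1} for the straightforward details, and your outline carries out precisely this computation along with the equally routine checks of \eqref{yrel} and of part~(ii) via the $\gl_m$-invariance of the split Casimir.
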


To prove this proposition one only needs to verify that the
commutation relations \eqref{ycom} are satisfied by 
the operators \eqref{yact} with $p=1\lcd N$ instead of
the elements $z_1\lcd z_N\in\H_N$ respectively.
This verification is straightforward, see
for instance in \cite[Section 1]{KN1}.
By using Proposition \ref{1.1} we get a functor
$
\E_N: U\mapsto(\CC^{\ts m})^{\ot N}\ot U
$
from the category of all $\gl_m\ts$-modules 
to the category of bimodules over $\gl_m$ and $\H_N\ts$.

Now let $\sl_m\subset\gl_m$ be the complex special linear Lie algebra.
We will also use a version of Proposition \ref{1.1} 
for the vector space $(\CC^{\ts m})^{\ot N}\ot\,U$ 
where 
$U$ is a module not of $\gl_m$ but
only of $\sl_m\,$. Denote $I=E_{11}+\ldots+E_{mm}$ 
so that $\gl_m=\sl_m\op\CC\,I\,$.
Moreover
\begin{equation}
\label{i}
\sum_{a,b=1}^m 
E_{ab}\ot E_{\ts ba}\,\in\,\frac1m\,\,I\ot I\,+\,\sl_m\ot\sl_m\,.
\end{equation}
Therefore an action of 
\begin{equation}
\label{yacts}
\sum_{a,b=1}^m E_{ab}^{\ts(p)}\ot E_{\ts ba}-\frac1m\ \id^{\,\ot\ts N}\ot I
\end{equation}
can be defined on the vector space $(\CC^{\ts m})^{\ot N}\ot\,U$ 
by using only the $\sl_m\ts$-module structure of $U$.
Because the element $I\in\gl_m$ is central, 
the operators \eqref{yacts} with $p=1\lcd N$ satisfy the 
same commutation relations \eqref{ycom}
as the operators \eqref{yact} respectively instead of $z_1\lcd z_N$.
  
\begin{cor}
\label{1.2}
{\rm(i)}
Using the\/ $\sl_m\ts$-module structure of $U$,  
an action of the algebra $\H_N$ on the vector space 
$(\CC^{\ts m})^{\ot N}\ot\ts U$
is defined as follows: the 
group $\Sym_N\subset \H_N$ acts 
by permutations of the $N$ tensor factors\/ $\CC^{\ts m}$, and the element 
$z_p\in \H_N$ with $p=1\lcd N$~acts~as \eqref{yacts}.
\\
{\rm(ii)} 
This action of $\H_N$ commutes with the
{\rm(}diagonal\,{\rm)} action of\/ 
$\sl_m$ on $(\CC^{\ts m})^{\ot N}\ot\ts U\ts$.
\end{cor}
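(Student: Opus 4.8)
The plan is to use the presentation of $\H_N$ recalled above: as generators take the subalgebra $\CC\ts\Sym_N$ together with the elements $z_1\lcd z_N\ts$, and as defining relations take the relations inside $\CC\ts\Sym_N\ts$, the relations \eqref{yrel}, and the commutation relations \eqref{ycom}. Write $y_p$ for the operator \eqref{yact} and $y_p'=y_p-\frac1m\,\id^{\,\ot\ts N}\ot I$ for the operator \eqref{yacts} on the vector space $(\CC^{\ts m})^{\ot N}\ot\ts U\ts$; by \eqref{i} the operator $y_p'$ is defined using only the $\sl_m\ts$-module structure of $U\ts$. Since permuting the $N$ tensor factors $\CC^{\ts m}$ commutes with the diagonal action of $\gl_m\ts$, to prove part (i) it suffices to check that the operators $y_1'\lcd y_N'$ satisfy \eqref{yrel} and \eqref{ycom} with $z_1\lcd z_N$ replaced by $y_1'\lcd y_N'$ respectively; the assignments $\si\mapsto(\text{permutation of factors})$ and $z_p\mapsto y_p'$ will then extend to an algebra homomorphism $\H_N\to\End\big((\CC^{\ts m})^{\ot N}\ot\ts U\big)$.

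For \eqref{yrel}: since $\si\in\Sym_N$ acts by permuting the tensor factors, conjugation by $\si$ carries $E^{\ts(p)}_{ab}$ to $E^{\ts(\si(p))}_{ab}$ and hence $y_p$ to $y_{\si(p)}\ts$, while it fixes the operator $\id^{\,\ot\ts N}\ot I$, which is symmetric in the $N$ factors $\CC^{\ts m}\ts$. Thus $\si\,y_p'\,\si^{-1}=y_{\si(p)}'\ts$. For \eqref{ycom}: one has $y_p'-y_q'=y_p-y_q\ts$, so the right-hand side of \eqref{ycom} is unchanged on passing from $y_p,y_q$ to $y_p',y_q'\ts$. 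On the left-hand side, $\big[\,\id^{\,\ot\ts N}\ot I\,,\id^{\,\ot\ts N}\ot I\,\big]=0$, while for each index $p$
\[
\big[\,y_p\,,\id^{\,\ot\ts N}\ot I\,\big]=\sum_{a,b=1}^m E^{\ts(p)}_{ab}\ot[\,E_{\ts ba}\,,I\,]=0
\]
because $I$ is central in $\gl_m\ts$. Hence $[\,y_p'\,,y_q'\,]=[\,y_p\,,y_q\,]$, and since the operators $y_1\lcd y_N$ already satisfy \eqref{ycom} by Proposition \ref{1.1}, so do $y_1'\lcd y_N'\ts$. This proves part~(i).

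For part (ii) it suffices to check that the diagonal action of $\sl_m$ commutes with the permutation operators and with each $y_p'\ts$. Commutativity with the permutation operators is clear. By Proposition \ref{1.1}(ii) the diagonal action of $\gl_m\ts$, and in particular that of $\sl_m\subset\gl_m\ts$, commutes with each $y_p\ts$. Finally, $\id^{\,\ot\ts N}\ot I$ commutes with the diagonal action of any $X\in\sl_m\ts$: the constituents of $X$ acting on the $N$ copies of $\CC^{\ts m}$ act on tensor factors disjoint from the one on which $I$ acts, and the constituent of $X$ acting on $U$ commutes with $I$ since $I$ is central in $\gl_m\ts$. Therefore the diagonal action of $\sl_m$ commutes with $y_p'$ as well, which completes the proof.

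There is no genuine obstacle here: the corollary is deduced from Proposition \ref{1.1} via the single observation \eqref{i}, namely that subtracting the $\Sym_N\ts$-invariant central term $\frac1m\,\id^{\,\ot\ts N}\ot I$ alters neither side of the defining relations \eqref{yrel} and \eqref{ycom}. The only points requiring attention are the $\Sym_N\ts$-invariance of $\id^{\,\ot\ts N}\ot I$ and the centrality of $I$ in $\gl_m\ts$, both of which are immediate.
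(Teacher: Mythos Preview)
Your proof is correct and follows essentially the same approach as the paper, which gives the argument in the paragraph preceding the corollary rather than as a separate proof: the key point in both is that subtracting the $\Sym_N$-invariant term $\frac1m\,\id^{\,\ot N}\ot I$ leaves the relations \eqref{ycom} unchanged because $I$ is central in $\gl_m\,$. Your write-up is more explicit in also verifying \eqref{yrel} and part (ii), but the underlying idea is identical; the only tacit step (shared with the paper) is that when $U$ is merely an $\sl_m\ts$-module one regards the computation formally, or equivalently extends $U$ to a $\gl_m\ts$-module by letting $I$ act as a scalar, so that $y_p$ and $\id^{\,\ot N}\ot I$ are individually meaningful.
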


Hence we get a functor
$
\F_N: U\mapsto(\CC^{\ts m})^{\ot N}\ot U
$
from the category of all \text{$\sl_m\ts$-modules} 
to the category of bimodules over $\sl_m$ and $\H_N\ts$.
Our main subject will be an analogue of
this functor for the affine Lie algebra $\slhat_m$ instead of 
$\sl_m\,$. The role of the degenerate affine Hecke algebra 
$\H_N$ will be then played by the 
\text{trigonometric Cherednik algebra\/}~$\T_N\,$.


\subsection{}
\label{sec:115}

For any $f\in\CC$ an automorphism of the degenerate affine Hecke algebra 
$\H_N$ identical on the subalgebra $\CC\ts\Sym_N\subset\H_N$ can be defined 
by mapping $u_p\mapsto u_p+f$ for all $p=1\lcd N$.
Hence we can modify the functor $\E_N$ by pulling its defining action
of $\H_N$ back through this automorphism. We will denote by $\E_N^{\,f}$
the modified functor. Like $\E_N=\E_N^{\,0}\,$, this is a functor
$
U\mapsto(\CC^{\ts m})^{\ot N}\ot U
$
from the category of all $\gl_m\ts$-modules 
to the category of bimodules over $\gl_m$ and $\H_N\ts$.
The modified functor will be needed to state Theorem \ref{1.25} below.
We will also let the parameter $m$ of the target category 
of $\E_N^{\,f}$ vary. This should cause no confusion.

Let $n$ be another positive integer.
The decomposition $\CC^{\ts m+n}=\CC^{\ts m}\op\CC^{\ts n}$ 
determines an embedding
of the direct sum $\gl_m\op\gl_n$ of Lie algebras to $\gl_{\ts m+n}\,$.
As a subalgebra of $\gl_{\ts m+n}\,$,
the direct summand $\gl_m$ is spanned by the matrix units 
$E_{ab}\in\gl_{\ts m+n}$ where
$a,b=1\lcd m\ts$. The direct summand $\gl_n$ is spanned by
the matrix units $E_{ab}$ where $a,b=m+1\lcd m+n\ts$.
Let $\q$ be 
the Abelian subalgebra of $\gl_{\ts m+n}$
spanned by the elements $E_{\ts ab}$ 
for all $a=m+1\lcd m+n$ and $b=1\lcd m\ts$.
Let $\p$ be the subalgebra of $\gl_{\ts m+n}$
spanned by those matrix units which do not belong to $\q\,$, so that 
we have $\gl_{m+n}=\p\op\q\,$.
Then $\p$ is a maximal parabolic subalgebra of the reductive 
Lie algebra $\gl_{\ts m+n}\,$. 
Note that $\gl_m\op\gl_n\subset\p$ by definition.

Now let $V$ be any $\gl_n\ts$-module.
Denote by $U\bt V$ the $\gl_{\ts m+n}$-module
\textit{parabolically induced\/}
from the $\gl_m\op\gl_n\ts$-module $U\ot V$.
To define $U\bt V$, one first extends the action of the Lie algebra
$\gl_m\op\gl_n$ on $U\ot V$ to 
$\p$ so that 
any matrix unit in $\p$ complementary to $\gl_m\op\gl_n$ 
acts on $U\ot V$ as zero.
By definition, $U\bt V$ is the $\gl_{\ts m+n}\ts$-module induced from the 
$\p\ts$-module $U\ot V$. Theorem~\ref{1.25}
will provide a description of the space $\E_N(\ts U\bt V\ts)_{\ts\q}$ 
of \textit{coinvariants} of $\E_N(\ts U\bt V\ts)$ 
relative to the action of the subalgebra $\q\subset\gl_{\ts m+n}\,$.
This space is the quotient of the vector space
$\E_N(\ts U\bt V\ts)$ by the image of the action of $\q\,$. 
Note that here the functor $\E_N$ is applied to 
a module of $\gl_{\ts m+n}$ rather than of $\gl_m\,$.
This is clear by the notation. Hence
we have an action of $\H_N$ on the vector space
$\E_N(\ts U\bt V\ts)_{\ts\q}\,$.
The subalgebra 
$\gl_m\op\gl_n\subset\gl_{m+l}$ also acts on this vector space,
and the latter action commutes with that of the algebra $\H_N\ts$.

For $K=0,1\lcd N$ let $\Sym_{K,N-K}$ be the subgroup of the symmetric 
group $\Sym_N$ preserving the subset $\{1\lcd K\}\subset\{1\lcd N\}\ts$.
This subgroup is naturally isomorphic to the direct product
$\Sym_K\times\Sym_{N-K}\ts$.
Further, the tensor product $\H_K\ot\H_{N-K}$ can be naturally identified 
with the subalgebra of $\H_N$ generated by the subgroup
$\Sym_{K,N-K}\subset\Sym_N$ and by all 
$u_1\lcd u_N\ts$. Denote by $\H_{K,N-K}$ this subalgebra.
We have the usual induction functor $\Ind_{\,\H_{K,N-K}}^{\,\H_N}\,$.

\begin{theorem}
\label{1.25}
The bimodule $\E_N(\ts U\bt V\ts)_{\ts\q}$ of\/ $\gl_m\op\gl_n$ and\/ $\H_N$
is equivalent to 
\begin{equation}
\label{inde}
\mathop{\op}\limits_{K=0}^N\,
\Ind_{\,\H_{K,N-K}}^{\,\H_N}\E_K(U)\ot\E_{N-K}^{\,-\ts m}(V)\,.
\end{equation}
\end{theorem}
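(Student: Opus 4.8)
The plan is to compute the coinvariants $\E_N(\ts U\bt V\ts)_{\ts\q}$ explicitly as a vector space, to identify the resulting $\gl_m\op\gl_n$-action, and then to match the $\H_N$-action with that on the induced module \eqref{inde}. As a vector space $\E_N(\ts U\bt V\ts)=(\CC^{\ts m+n})^{\ot N}\ot\ts(U\bt V)$, and by the Poincar\'e--Birkhoff--Witt theorem $U\bt V=\U(\q)\ot(U\ot V)$ as a $\q$-module, with $\q$ Abelian acting on the left tensor factor by left multiplication. The first step is therefore to show that the quotient by the image of $\q$ replaces $\U(\q)$ by $\CC$ \emph{but only after absorbing the action of $\q$ on the $N$ auxiliary spaces $\CC^{\ts m+n}$}; concretely, for $x\in\q$ one has $x$ acting on $\E_N(\ts U\bt V\ts)$ as $\sum_p x^{\ts(p)}\ot 1+1^{\ot N}\ot x$, so in the quotient the generator $E_{ab}\in\q$ (with $a>m\ge b$) acts on $(\CC^{\ts m+n})^{\ot N}$ alone, up to the image of $\U(\q)_{+}$. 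Writing $\CC^{\ts m+n}=\CC^{\ts m}\op\CC^{\ts n}$, decompose $(\CC^{\ts m+n})^{\ot N}=\bigoplus_{S\subseteq\{1,\dots,N\}}(\CC^{\ts m})^{S}\ot(\CC^{\ts n})^{S^{c}}$ according to which tensor slots land in $\CC^{\ts m}$ versus $\CC^{\ts n}$. The operators $E_{ab}^{\ts(p)}$ with $a>m\ge b$ move slot $p$ from the $\CC^{\ts m}$-part into the $\CC^{\ts n}$-part, so a standard filtration/straightening argument shows that a set of representatives for the coinvariants is obtained by taking, in each slot, a basis vector of $\CC^{\ts m}$ (never of $\CC^{\ts n}$) — i.e.\ the coinvariant space is spanned by the image of $(\CC^{\ts m})^{\ot N}\ot(U\ot V)$. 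This gives, at least as a first approximation, $\E_N(\ts U\bt V\ts)_{\ts\q}\cong(\CC^{\ts m})^{\ot N}\ot U\ot V$ as a vector space.

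Next I would refine the count: the $K=0,\dots,N$ direct summands in \eqref{inde} must come from the combinatorial choice of \emph{which} $K$ of the $N$ slots are "assigned" to $U$ and which $N-K$ are "assigned" to $V$. The point is that although every coinvariant representative lies in $(\CC^{\ts m})^{\ot N}$, the $\gl_n$-action on $V$ is recovered through the commutators $[E_{ab},E_{cd}]$ that relate $\q$ to $\gl_n\subset\p$. More precisely, $\U(\gl_{m+n})=\U(\q)\ot\U(\p)$ and inside $\U(\p)$ the subalgebra $\gl_n$ still acts on the $V$-factor; the coinvariant functor does not kill this. The cleanest way to see the grading by $K$ is: decompose $(\CC^{\ts m})^{\ot N}$ has no $\CC^n$-slots so that's not it — rather, reconsider and allow slots to go to $\CC^n$ but then use that the $\q$-action lets one trade a $\CC^n$-slot against lowering in $\U(\q)$, reaching a normal form where the slots sent to $\CC^{\ts n}$ carry exactly the data of $\E_{N-K}(V)$ and those sent to $\CC^{\ts m}$ carry $\E_K(U)$, with $K=|S|$. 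Summing over subsets $S$ of size $K$ and noting that $\Sym_N$ permutes these subsets transitively with stabilizer $\Sym_{K,N-K}$ produces precisely $\Ind_{\,\H_{K,N-K}}^{\,\H_N}(\,\cdot\,)$. So the vector-space isomorphism is
\beqs
\E_N(\ts U\bt V\ts)_{\ts\q}\;\cong\;\mathop{\op}\limits_{K=0}^N\,\CC\ts\Sym_N\ot_{\CC\ts\Sym_{K,N-K}}\bigl((\CC^{\ts m})^{\ot K}\ot U\bigr)\ot\bigl((\CC^{\ts n})^{\ot(N-K)}\ot V\bigr).
\eeqs

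The third step is to check the algebra actions on this normal form. The $\gl_m\op\gl_n$-action is immediate: $\gl_m$ acts diagonally on the $K$ slots in $\CC^{\ts m}$ and on $U$, while $\gl_n$ acts diagonally on the $N-K$ slots in $\CC^{\ts n}$ and on $V$; this is visibly the $\gl_m\op\gl_n$-bimodule structure on the right-hand side. For $\H_N$: the subgroup $\Sym_{K,N-K}$ acts by permuting the two blocks of slots separately, matching $\Sym_K\times\Sym_{N-K}$ on $\E_K(U)\ot\E_{N-K}(V)$; and the generator $u_p=z_p+\sum_{q<p}\si_{qp}$ must be computed on a representative. Here the crucial computation is that, on a slot $p$ assigned to $V$, the operator $z_p=\sum_{a,b}E_{ab}^{\ts(p)}\ot E_{ba}$ acting on the coinvariant representative picks up not the bare $\gl_n$-Casimir-type element but a \emph{shifted} one: the matrix units $E_{ab}$ with one index in $\{1,\dots,m\}$ and one in $\{m+1,\dots,m+n\}$ contribute, modulo the image of $\q$, a correction equal to $-m$ times the identity on that $V$-slot. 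This is exactly the shift that turns $\E_{N-K}(V)$ into $\E_{N-K}^{\,-\ts m}(V)$, i.e.\ $u_p\mapsto u_p-m$ on the $V$-slots — and it is the place where the identity \eqref{i} together with the PBW normal form in $\U(\gl_{m+n})$ must be used carefully. This shift computation is the main obstacle: one has to track the commutators $[E_{ab},E_{ba}]=E_{aa}-E_{bb}$ summed over $b\le m<a$, which on the $\p$-side gives a multiple of the central direction, and verify that modulo $\U(\q)_{+}\cdot\E_N(\ts U\bt V\ts)$ this reduces to the scalar $-m$ on each slot that has been moved to $\CC^{\ts n}$; the slots still in $\CC^{\ts m}$ receive no shift, giving $\E_K(U)$ unshifted. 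Once this is verified on generators $\si_p$ and $u_p$ of $\H_N$, and compatibility with the $\Sym_N$-coset structure is checked (using \eqref{yrel} and the transitivity of the $\Sym_N$-action on size-$K$ subsets), the bimodule isomorphism with \eqref{inde} follows.
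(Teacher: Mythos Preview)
Your first paragraph contains a genuine error that you partially notice but do not properly fix. You correctly observe that $E_{ab}^{\ts(p)}$ with $a>m\ge b$ sends the $\CC^{\ts m}$-summand of slot $p$ into the $\CC^{\ts n}$-summand. But from this you draw exactly the wrong conclusion: if the image of $\q$ on the slot side consists of vectors with at least one slot pushed into $\CC^{\ts n}$, then killing that image would \emph{not} leave $(\CC^{\ts m})^{\ot N}$ as representatives. In fact the coinvariant space is $(\CC^{\ts m+n})^{\ot N}\ot U\ot V$, not $(\CC^{\ts m})^{\ot N}\ot U\ot V$; already the dimension count $\sum_K\binom{N}{K}m^Kn^{N-K}=(m+n)^N$ coming from the right-hand side \eqref{inde} shows your first guess is too small. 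The relation in $W/\q\,W$ lets one trade a unit of $\U(\q)$-degree for \emph{creating} a $\CC^{\ts n}$-slot, not for eliminating one; iterating brings every class to degree zero in $\U(\q)$, i.e.\ to $(\CC^{\ts m+n})^{\ot N}\ot 1\ot U\ot V$.

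Once you arrive at the correct picture in your second paragraph, your overall strategy matches the paper's: identify the coinvariants with $(\CC^{\ts m+n})^{\ot N}\ot U\ot V$, decompose according to which slots lie in $A=\CC^{\ts m}$ versus $B=\CC^{\ts n}$, and check the $\H_N$-action on the cyclic pieces $A^{\ot K}\ot B^{\ot(N-K)}\ot U\ot V$. What you are missing is a clean bijectivity argument. The paper handles this by putting an ascending filtration on $(\CC^{\ts m+n})^{\ot N}=(A\op B)^{\ot N}$ by the tensor degree in $A$; the $\q$-action preserves this filtration and acts trivially on the associated graded, so on the graded of $W$ the subalgebra $\q$ acts only via left multiplication in $\U(\q)$, and bijectivity of the natural map $(\CC^{\ts m+n})^{\ot N}\ot U\ot V\to W/\q\,W$ follows because $\U(\q)$ is free commutative. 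Your ``standard filtration/straightening argument'' is pointing in this direction but with the wrong grading and the wrong target.

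Your third paragraph, on the origin of the shift $-m$, is essentially correct and is the same mechanism the paper uses. Concretely, for $p>K$ the cross terms $\sum_{a\le m}\sum_{b>m}E_{ab}^{\ts(p)}\ot E_{ba}$ land in $\q\cdot W$; rewriting them modulo $\q\,W$ throws the $E_{ba}$ back onto the slots as $-\sum_q E_{ba}^{\ts(q)}E_{ab}^{\ts(p)}$, and on $A^{\ot K}\ot B^{\ot(N-K)}$ only $q\le K$ (giving $\si_{pq}$, which cancels the unwanted terms in $\sum_{q<p}\si_{pq}$) and $q=p$ (giving $\sum_{b>m}E_{bb}^{\ts(p)}=m$ on a $B$-slot) survive, yielding exactly the $-m$. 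So the shift is not coming from \eqref{i} or from any central direction, but from this concrete rewriting modulo $\q\,W$; you should carry it out rather than gesture at commutators $[E_{ab},E_{ba}]$.
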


\begin{proof}
For any index $K=0\lcd N$ the vector space of the corresponding
direct summand in \eqref{inde} is the same as that of the induced
$\Sym_N\ts$-module
$$
\Ind_{\,\Sym_{K,N-K}}^{\,\Sym_N}
(\CC^{\ts m})^{\ot K}\ot U\ot(\CC^{\ts n})^{\ot\ts(N-K)}\ot V\tts.
$$
By definition, the latter $\Sym_N\ts$-module is a quotient
of the vector space
\begin{equation}
\label{quot}
\CC\ts\Sym_N\ot
(\CC^{\ts m})^{\ot K}\ot U\ot(\CC^{\ts n})^{\ot\ts(N-K)}\ot V
\end{equation}
such that the right multiplication by any element
of the subgroup $\Sym_{K,N-K}\subset\Sym_N$ in the first tensor factor
of \eqref{quot} has the same effect on the quotient,
as the corresponding permutation of the $K$ tensor factors $\CC^{\ts m}$
and the $N-K$ tensor factors $\CC^{\ts n\ts}$.
The action of the group $\Sym_N$ on the quotient is 
then via the left multiplication in the first tensor factor
of \eqref{quot}. 

By letting the symmetric group $\Sym_N$ permute
all $N$ tensor factors $\CC^{\ts m}$ and $\CC^{\ts n}$ of
\eqref{quot}, the direct sum over $K=0\lcd N$ of the above described
quotients can be identified with 
\begin{equation}
\label{mn}
(\CC^{\ts m+n})^{\ot N}\ot U\ot V\tts.
\end{equation}
Here we use the decomposition 
$\CC^{\ts m+n}=\CC^{\ts m}\op\CC^{\ts n}$
and transpose the tensor factor $U$ of \eqref{quot} with each of
the next $N-K$ tensor factors $\CC^{\ts n}\ts$. 
Under this identification,
the action of $\Sym_N$ on the quotients becomes 
the permutational 
action on the first $N$ tensor factors of \eqref{mn}.
To avoid confusion, we will denote respectively by
$A$ and $B$ the subspaces of $\CC^{\ts m+n}$ corresponding to
the first and the second summands
in the decomposition $\CC^{\ts m+n}=\CC^{\ts m}\op\CC^{\ts n}\ts$.

Let us describe the action of the elements $u_1\lcd u_N\in\H_N$ on
\eqref{mn} coming from the identification of
this vector space with that of the bimodule \eqref{inde}.
Under this identification, for any $K$ the subspace
\begin{equation}
\label{subK}
A^{\ts\ot K}\ot B^{\,\ot\ts(N-K)}\ot U\ot V
\end{equation}
of \eqref{mn} is preserved by the action of 
$u_1\lcd u_N\,$. 
For $p\le K$ the element $u_p$ acts on the subspace
\eqref{subK} as
\begin{equation}
\label{uk}
\sum_{q=1}^p\,\si_{pq}\ot\id\ot\id\,+
\sum_{a,b=1}^m E_{ab}^{\ts(p)}\ot E_{\ts ba}\ot\id\,.
\end{equation}
Here $\si_{pq}$ is the permutation of the $p\,$th and
$q\,$th tensor factors $\CC^{\ts m+n}$ of \eqref{mn}
while $E_{ab}^{\ts(p)}$ is the operator on 
$(\CC^{\ts m+n})^{\ot N}$ acting as \eqref{epab}.
For $p>K$ the element $u_p$ acts on 
\eqref{subK} as
\begin{equation}
\label{unk}
\sum_{q=K+1}^p\si_{pq}\ot\id\ot\id\,+
\!\!\!\sum_{a,b=m+1}^{m+n}\!\!\!
E_{ab}^{\ts(p)}\ot\id\ot E_{\ts b-m\ts,\ts a-m}\,-\,m\,.
\end{equation}
The union of the subspaces \eqref{subK} for all $K=0\lcd N$
is cyclic in \eqref{mn} under the action of the subalgebra
$\CC\ts\Sym_N\subset\H_N\,$. Hence the action of $\H_N$ on
\eqref{mn} is now uniquely determined.

On the other hand, 
the 
vector space of the $\gl_{\ts m+n}\ts$-module $U\bt V$
can be identified with 
$\U(\q)\ot U\ot V$ whereon the Lie subalgebra 
$\q\subset\gl_{\ts m+n}$ acts via
left multiplication in the first tensor factor. 
Here the Lie algebra $\q$ is Abelian.
Hence its universal enveloping algebra
$\U(\q)$ is a free commutative algebra over $\CC$
generated by the elements $E_{ab}$ with $a=m+1\lcd m+n$ and $b=1\lcd m\ts$.
The vector space of the $\gl_m\op\gl_n\ts$-module $U\ot V$ 
can be then identified with the subspace
\begin{equation}
\label{subuv}
1\ot U\ot V\subset \U(\q)\ot U\ot V\ts.
\end{equation}
By the definition of a parabolically induced module, 
the action of the subalgebra $\p\subset\gl_{\ts m+n}$
preserves this subspace.
The $E_{ab}\in\p$ with $a,b=1\lcd m$ and $a,b=m+1\lcd m+n$
act on this subspace as
$
\id\ot E_{ab}\ot\id
\ \text{and}\ 
\id\ot\id\ot E_{a-m\ts,\ts b-m}
$
respectively,
while any element $E_{ab}\in\p$ with $a=1\lcd m$ and $b=m+1\lcd m+n$
acts as zero.
All this determines the action of $\gl_{\ts m+n}$ on $\U(\q)\ot U\ot V\ts$.

Now consider the bimodule $\E_N(\ts U\bt V\ts)$ 
over $\gl_{\ts m+n}$ and $\H_N\,$.
Its vector space is
\begin{equation}
\label{mnq}
W=(\CC^{\ts m+n})^{\ot N}\ot\ts \U(\q)\ot U\ot V\ts.
\end{equation}
The corresponding space of $\q\ts$-coinvariants is isomorphic to \eqref{mn}.
Indeed, we can define a bijective linear mapping $\io$ from
\eqref{mn} to the quotient 
$W/\ts\q\,W$ as follows.
First we map \eqref{mn} to the subspace
$$
(\CC^{\ts m+n})^{\ot N}\ot\ts1\ot U\ot V
\subset W
$$
in the natural way, and then regard the image of the latter mapping 
modulo $\q\,W$. 
To prove the bijectivity of resulting mapping 
$\io\,$, consider the ascending $\ZZ\ts$-filtration on the vector space
$$
(\CC^{\ts m+n})^{\ot N}=(\ts A\op B\ts)^{\ot N}
$$
defined by the tensor degree in $A\ts$.
The action of the subalgebra $\q\subset\gl_{\ts m+n}$ preserves
the filtration, and the corresponding graded action of $\q$ is trivial.
Therefore this filtration on $(\CC^{\ts m+n})^{\ot N}$
induces an ascending filtration on $W$ such that the
corresponding graded action of $\q$ is via left multiplication in the
tensor factor $\U(\q)$ of \eqref{mnq}. The bijectivity of 
our $\io$ now follows, because the algebra $\U(\q)$ is free commutative.  

The mapping $\io$ defined above is $\gl_m\op\gl_n\ts$-equivariant,
see the above description of the action of $\p$ on the subspace 
\eqref{subuv}. This $\io$ is also equivariant relative
to the action of $\Sym_N\ts$. But relative to the action of $\Sym_N$
on \eqref{mn}, the union of the subspaces
\eqref{subK} for $K=0\lcd N$ is cyclic.
To complete the proof of Theorem \ref{1.25} 
it now remains to check for each $p=1\lcd N$ the 
$u_p\ts$-equivariance of the restriction of the mapping
$\io$ to each of these subspaces.

The image of the subspace \eqref{subK} under $\io$ is the subspace
\begin{equation}
\label{1subK}
A^{\ts\ot K}\ot B^{\,\ot\ts(N-K)}\ot 1\ot U\ot V\subset W
\end{equation}
regarded modulo $\q\,W$. The elements $E_{ab}\in\gl_{\ts m+n}$
with $b=m+1\lcd m+n$ annihilate the subspace $X\subset \CC^{\ts m+n}\ts$.
The elements $E_{ba}\in\gl_{\ts m+n}$ with $b=1\lcd m$ and 
$a=m+1\lcd m+n$ annihilate the subspace \eqref{subuv}. Hence for $p\le K$
the element $u_p\in\H_N$ acts on the subspace \eqref{1subK} as 
$$
\sum_{q=1}^p\,\si_{pq}\ot\id\ot\id\ot\id\,+
\sum_{a,b=1}^m E_{ab}^{\ts(p)}\ot\id\ot E_{\ts ba}\ot\id\,.
$$
By comparing the above displayed sum with \eqref{uk} we conclude that
the restriction of the mapping $\io$ to the subspace \eqref{subK}
is $u_p\ts$-equivariant for any $p\le K\ts$.

Now suppose that $p>K\ts$.
The elements $E_{ab}\in\gl_{\ts m+n}$
with $b=1\lcd m$ annihilate the subspace $B\subset \CC^{\ts m+n}\ts$.
Hence for our $p$
the element $u_p\in\H_N$ acts on the subspace \eqref{1subK} as
\begin{gather}
\nonumber
\sum_{q=1}^p\,\si_{pq}\ot\id\ot\id\ot\id\,+
\sum_{a=1}^m
\sum_{b=m+1}^{m+n} E_{ab}^{\ts(p)}\ot E_{\ts ba}\ot\id\ot\id\ +
\\
\label{lastsum}
\!\!\!\!
\sum_{a,b=m+1}^{m+n}\!\!\!
E_{ab}^{\ts(p)}\ot\id\ot\id\ot E_{\ts b-m\ts,\ts a-m}
\,.
\end{gather}
The tensor factor $E_{ba}$ in the display 
\eqref{lastsum} belongs to the subalgebra $\q\subset\gl_{\ts m+n}\,$.
Therefore modulo $\q\,W$, the result of applying
the first line of the display \eqref{lastsum}
to elements of $W$ is the same as the result of applying
\begin{equation}
\label{secondlast}
\sum_{q=1}^p\,\si_{pq}\ot\id\ot\id\ot\id\,-\,
\sum_{q=1}^N\,\,\ts
\sum_{a=1}^m
\sum_{b=m+1}^{m+n} E_{ba}^{\ts(q)}E_{ab}^{\ts(p)}\ot\id\ot\id\ot\id\,.
\end{equation}
Here the tensor factors 
$E_{ba}^{\ts(q)}E_{ab}^{\ts(p)}$ with $q>K$ but with $q\neq p$ 
vanish on the subspace 
\begin{equation}
\label{xy}
A^{\ts\ot K}\ot B^{\,\ot\ts(N-K)}\subset(\CC^{\ts m+n})^{\ot N}\ts.
\end{equation}
Any tensor factor $E_{ba}^{\ts(q)}E_{ab}^{\ts(p)}$ in \eqref{secondlast}
with $q\le K$
acts on this subspace as permutation~$\si_{pq}\,$. 
The sum of the tensor factors 
$$
E_{ba}^{\ts(p)}E_{ab}^{\ts(p)}=E_{bb}^{\ts(p)}
$$ 
over 
$a=1\lcd m$ and $b=m+1\lcd m+n$ acts on 
the subspace \eqref{xy} as the scalar $m\,$. 
Hence after cancellations,
the sum \eqref{secondlast} acts on the subspace \eqref{1subK} as
\begin{equation}
\label{thirdlast}
\sum_{q=K+1}^p\,\si_{pq}\ot\id\ot\id\ot\id\,-\,m\,.
\end{equation}
By comparing \eqref{unk} with the sum of the second line in 
\eqref{lastsum} and of \eqref{thirdlast} 
we now conclude that
the restriction of the mapping $\io$ to the subspace \eqref{subK}
is $u_p\ts$-equivariant for $p>K\ts$.
\end{proof}


\section{Cherednik algebras}
\medskip


\subsection{}
\label{sec:12}

First define the \textit{rational Cherednik algebra\/} $\R_N\,$.
This is a complex associative algebra depending on a parameter $\ka\in\CC\,$.
It is generated by the 
symmetric group algebra $\CC\ts\Sym_N$ together with two sets of 
commuting elements $x_1\lcd x_N$ and $y_1\lcd y_N$ where like in \eqref{yrel}
\begin{equation}
\label{sirel}
\si\,x_p\,\si^{-1}=x_{\ts\si(p)}
\quad\text{and}\quad
\si\,y_p\,\si^{-1}=y_{\ts\si(p)}
\end{equation}
for every permutation $\si\in\Sym_N\ts$. For any indices $p$ and $q\,$,
the elements $y_p$ and $x_q$ do not commute with each other but satisfy
the commutation relations
\begin{align}
\label{cross3}
[\,y_{p}\,,x_q\,]&=-\,\si_{pq}
\quad\text{for}\quad
q\neq p\,;
\\
\label{cross4}
[\,y_{p}\,,x_p\,]&=\ka+\sum_{r\neq p}\,\si_{pr}\,.
\end{align}

Multiplication in the algebra $\R_N$ provides a bijective linear map
\begin{equation}
\label{ratmult}
\CC[\ts x_1\lcd x_N\ts ]\ot
\CC\ts\Sym_N\ot\CC[\ts y_1\lcd y_N\ts]
\,\to\,\R_N\,;
\end{equation}
see for instance \cite[Theorem 1.3]{EG}. The bijectivity here
also follows from the next proposition, which has been a motivation
for introducing the algebra $\R_N\,$. For any $p=1\lcd N$
consider the \textit{Dunkl operator}
acting on the vector space $\CC[\ts x_1\lcd x_N\ts]$ as
\begin{equation}
\label{dp}
\ka\,\d_p+\sum_{r\neq p}\,\frac1{x_p-x_r}\,(1-\si_{pr})\,.
\end{equation}
Here $\d_p$ means the derivation in the polynomial ring $\CC[x_1\lcd x_N]$
relative to the variable~$x_p\,$, and
the symmetric group $\Sym_N$ acts on the latter ring
by permutations of $x_1\lcd x_N$ as usual.

\begin{prop}
\label{1.3}
\label{dunkl} 
An action of the algebra 
$\R_N$ on the vector space $\CC[\ts x_1\lcd x_N\ts]$
can be defined as follows: 
the group $\Sym_N\subset \R_N$ acts by
permutations of the $N$ variables,
the element $x_p\in\R_N$ acts via mutiplication, 
and the element $y_p\in\R_N$ acts as the operator \eqref{dp}.
\end{prop}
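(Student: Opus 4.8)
The plan is to check that the operators on $\CC[\ts x_1\lcd x_N\ts]$ prescribed in the statement satisfy every defining relation of $\R_N\ts$. First I would verify that the Dunkl operator \eqref{dp} really is an operator on the polynomial ring and not merely on its field of fractions: for any $f\in\CC[\ts x_1\lcd x_N\ts]$ and any $r\neq p\ts$, the polynomial $(1-\si_{pr})\ts f$ vanishes whenever $x_p=x_r$ and is therefore divisible by $x_p-x_r$ in $\CC[\ts x_1\lcd x_N\ts]\ts$, so that $\tfrac1{x_p-x_r}\ts(1-\si_{pr})\ts f$ is again a polynomial. The commutativity of $x_1\lcd x_N$ and the relations $\si\ts x_p\ts\si^{-1}=x_{\ts\si(p)}$ from \eqref{sirel} are immediate, since these elements act by multiplication. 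The relations $\si\ts y_p\ts\si^{-1}=y_{\ts\si(p)}$ reduce, by means of $\si\ts\d_p\ts\si^{-1}=\d_{\ts\si(p)}$ and $\si\,\tfrac1{x_p-x_r}\ts(1-\si_{pr})\ts\si^{-1}=\tfrac1{\ts x_{\si(p)}-x_{\si(r)}}\ts(1-\si_{\si(p)\ts\si(r)})\ts$, to a relabelling of the summation index; as in the text it is enough to take $\si=\si_q\ts$ here.

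Next I would establish the cross relations \eqref{cross3} and \eqref{cross4}. Commuting \eqref{dp} with multiplication by $x_q\ts$, the summand $\ka\ts\d_p$ contributes $\ka$ if $q=p$ and nothing otherwise, while for each $r\neq p\ts$, using $\si_{pr}\ts x_q=x_{\ts\si_{pr}(q)}\ts\si_{pr}\ts$, one finds
\[
\Bigl[\,\tfrac1{x_p-x_r}\ts(1-\si_{pr})\,,\ts x_q\,\Bigr]
=\tfrac{\ts x_q-x_{\si_{pr}(q)}}{x_p-x_r}\ \si_{pr}\ts.
\]
The right-hand side is zero unless $q\in\{p,r\}\ts$, and it equals $\si_{pr}$ when $q=p$ and $-\ts\si_{pr}$ when $q=r\ts$. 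Summing over $r\neq p$ therefore yields $\ka+\sum_{r\neq p}\si_{pr}\ts$, which is \eqref{cross4}, in the case $q=p\ts$; and, in the case $q\neq p\ts$, the only surviving summand is the one with $r=q\ts$, giving $-\ts\si_{pq}\ts$, which is \eqref{cross3}.

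The substantial point is the commutativity $[\,y_p\,,y_q\,]=0$ of the Dunkl operators. I would write $y_p=\ka\ts\d_p+D_p$ with $D_p=\sum_{r\neq p}\tfrac1{x_p-x_r}\ts(1-\si_{pr})$ and expand
\[
[\,y_p\,,y_q\,]=\ka\,[\,\d_p\,,D_q\,]-\ka\,[\,\d_q\,,D_p\,]+[\,D_p\,,D_q\,]\ts.
\]
In $[\,\d_p\,,D_q\,]$ only the summand of $D_q$ with $r=p$ survives, and in $[\,\d_q\,,D_p\,]$ only the summand of $D_p$ with $r=q\ts$; a short computation using $\d_p\ts\si_{pq}=\si_{pq}\ts\d_q$ shows that these two commutators are equal, so the terms linear in $\ka$ cancel. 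It then remains to prove $[\,D_p\,,D_q\,]=0\ts$. I would expand this commutator as a sum of products $\tfrac1{x_p-x_r}\ts(1-\si_{pr})\cdot\tfrac1{x_q-x_s}\ts(1-\si_{qs})$ and their transposes, and group the terms according to how many of $p,q,r,s$ coincide. The terms with all four indices distinct cancel in pairs because the two factors then commute; the terms with only two distinct indices vanish individually because $\bigl(\tfrac1{x_p-x_q}\ts(1-\si_{pq})\bigr)^{\ts2}=0\ts$; and what remains is a sum over triples of distinct indices $i,j,k\ts$, whose vanishing rests on the partial fraction identity
\[
\frac1{(x_i-x_j)\ts(x_j-x_k)}+\frac1{(x_j-x_k)\ts(x_k-x_i)}+\frac1{(x_k-x_i)\ts(x_i-x_j)}=0
\]
together with $\si_{ij}^{\ts2}=1$ and the relations $\si_{ij}\ts\si_{jk}=\si_{jk}\ts\si_{ik}=\si_{ik}\ts\si_{ij}$ among the transpositions. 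Carrying out this last bookkeeping is the only nontrivial step, and is in essence Dunkl's theorem on the commutativity of the rational Dunkl operators of type $A$ (see also \cite{EG}); it gives $[\,D_p\,,D_q\,]=0$ and hence $[\,y_p\,,y_q\,]=0\ts$. Once all the defining relations of $\R_N$ have been verified the proposition follows; this also reproves the injectivity of the map \eqref{ratmult}, since the relations let one write every element of $\R_N$ as a combination of ordered monomials $x_1^{\ts a_1}\cdots x_N^{\ts a_N}\,\si\,y_1^{\ts b_1}\cdots y_N^{\ts b_N}\ts$, and the images of these under the Dunkl representation are linearly independent operators, as one sees by evaluating on monomials of suitable degree.
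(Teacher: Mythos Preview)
Your proof is correct and follows exactly the strategy the paper indicates: verify the second relation in \eqref{sirel} and the cross relations \eqref{cross3}, \eqref{cross4} for the Dunkl operator \eqref{dp}, with the commutativity of the Dunkl operators being the one nontrivial ingredient. The paper simply cites Dunkl \cite{D} for this commutativity and omits the remaining straightforward verifications, whereas you carry them out in full and also sketch a direct proof of $[\,D_p\,,D_q\,]=0$ via the partial-fraction identity; your concluding remark on recovering the injectivity of \eqref{ratmult} is a nice bonus not present in the paper's treatment.
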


In particular the operators \eqref{dp} with $p=1\lcd N$ 
pairwise commute. This fact is well known
and goes back to the celebrated work of Dunkl \cite{D}.
Due to this fact the proof of Proposition \ref{1.3} reduces to verifying
the second relation \eqref{sirel} and the 
relations \eqref{cross3},\eqref{cross4}
for the operator \eqref{dp} instead of $y_p\,$.
That is straightforward and we omit the~details. 

Observe that the algebra $\R_N$ contains a copy of the degenerate affine 
Hecke algebra $\H_N$ as a subalgebra. An injective homomorphism
$\H_N\to\R_N$ identical on the symmetric group $\Sym_N\subset\H_N$ can be 
defined by mapping $z_p\mapsto x_p\,y_p$ for each index $p=1\lcd N\ts$.
To prove the homomorphism property we need to verify the relations
\eqref{yrel},\eqref{ycom} for $x_p\,y_p\in\R_N$ instead of $z_p\in\H_N\,$.
The first of the two relations to be verified follows from 
\eqref{sirel}. Further, for~$p\neq q$
\begin{align*}
[\,x_p\,y_p\,,x_q\,y_q\,]&=
x_p\,(\ts x_q\,y_p-[\,x_q\,,y_p\,]\,)\,y_q-x_q\,y_q\,x_p\,y_p=
x_q\,x_p\,y_q\,y_p-x_p\,\si_{pq}\,y_q-x_q\,y_q\,x_p\,y_p
\\
&=x_q\,[\,x_p\,,y_q\,]\,y_p-x_p\,\si_{pq}\,y_q=
x_q\,\si_{pq}\,y_p-x_p\,\si_{pq}\,y_q=\si_{pq}\,(\ts x_p\,y_p-x_q\,y_q\ts)
\end{align*}
as needed. Here we used only the relations \eqref{sirel},\eqref{cross3}.
The injectivity of homomorphism $\H_N\to\R_N$ thus defined
follows from the bijectivity of the multiplication map \eqref{ratmult}.

Next we consider the \textit{trigonometric Cherednik algebra\/} $\T_N\,$.
It can be defined as the ring of fractions of the algebra $\R_N$
relative to the set of denominators $x_1\lcd x_N$. 
Multiplication in the algebra $\T_N$ provides a bijective linear map
\begin{equation*}
\label{trigmult}
\CC[\ts x_1\ts,x_1^{-1}\lcd x_N\ts,x_N^{-1}\ts]\ot
\CC\ts\Sym_N\ot\CC[\ts y_1\lcd y_N\ts]
\,\to\,\T_N\,;
\end{equation*}
see again \cite[Theorem 1.3]{EG}. Equivalently, the $\T_N$
can be defined as the complex associative algebra
generated by the ring 
$\CC[\ts x_1\ts,x_1^{-1}\lcd x_N\ts,x_N^{-1}\ts]$
of Laurent polynomials in $x_1\lcd x_N$ and by the
degenerate affine Hecke algebra $\H_N$ subject to 
natural relations
$
\si\,x_p\,\si^{-1}=x_{\ts\si(p)}
$
for all $\si\in\Sym_N$ and to the commutation relations
\begin{align}
\label{cross8}
[\,z_{p}\,,x_q\,]&=-\,x_p\,\si_{pq}
\quad\text{for}\quad
q\neq p\,;
\\[4pt]
\label{cross9}
[\,z_{p}\,,x_p\,]&=\ka\,x_p+\sum_{r\neq p}\,x_p\,\si_{pr}\,.
\end{align}
To prove the equivalence of two definitions of $\T_N$
we will use the embedding $\H_N\to\R_N$ constructed above.
Then we will only need to verify the relations
\eqref{cross8} and \eqref{cross9}
for the element 
$x_p\,y_p\in\R_N$ instead of 
$z_p\in\H_N\ts$.
This verification is direct by the defining
relations \eqref{cross3} and \eqref{cross4} in $\R_N\,$.
Using both definitions of $\T_N$ we obtain a corollary to
Proposition~\ref{1.3}.

\begin{cor}
\label{1.4}
An action of the algebra 
$\T_N$ on the vector space 
$\CC[\ts x_1\ts,x_1^{-1}\lcd x_N\ts,x_N^{-1}\ts]$
can be defined as follows: 
the elements $x_p\,,x_p^{-1}\in\T_N$ act via mutiplication,
the group $\Sym_N\subset\H_N$ acts by permutations of the $N$ variables, 
and the element $z_p\in\H_N$ acts as the operator
\begin{equation*}
\ka\,x_p\,\d_p+\sum_{r\neq p}\,\frac{x_p}{x_p-x_r}\,(1-\si_{pr})\,.
\end{equation*}
\end{cor}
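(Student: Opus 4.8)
The plan is to obtain this corollary by localizing the action of the rational Cherednik algebra $\R_N$ provided by Proposition~\ref{1.3}. By that proposition $\R_N$ acts on $P:=\CC[\ts x_1\lcd x_N\ts]$ with each $x_p$ acting by multiplication and each $y_p$ as the Dunkl operator \eqref{dp}. First I would observe that $P$ is a cyclic $\R_N\ts$-module, generated by the constant polynomial $1\ts$: the operators \eqref{dp} and $\si-1$ for $\si\in\Sym_N$ all annihilate $1\ts$, so the left ideal $J\subset\R_N$ generated by $y_1\lcd y_N$ and by all the $\si-1$ lies in the annihilator of $1\ts$; comparing with the multiplication map \eqref{ratmult} one checks that $\R_N/J\to P\ts$, $\,r+J\mapsto r\cdot1\ts$, is a linear bijection, hence $J=\operatorname{Ann}(1)$ and $P\cong\R_N/J$ as $\R_N\ts$-modules.

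Let $S$ be the multiplicative set generated by $x_1\lcd x_N$ in $\R_N\ts$, so that $\T_N=S^{-1}\R_N$ by the first description of $\T_N$ recalled above. Then the left $\T_N\ts$-module $S^{-1}P=\T_N\ot_{\R_N}P\cong\T_N/\ts\T_N J$ is, as a vector space, identified with $\CC[\ts x_1\ts,x_1^{-1}\lcd x_N\ts,x_N^{-1}\ts]$ by means of the analogue of \eqref{ratmult} for $\T_N\ts$, in such a way that the canonical map $P\to S^{-1}P$ becomes the natural inclusion $\CC[\ts x_1\lcd x_N\ts]\subset\CC[\ts x_1\ts,x_1^{-1}\lcd x_N\ts,x_N^{-1}\ts]$, which is injective since each $x_p$ acts injectively on $P$. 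On $S^{-1}P$ the elements of $S$ act invertibly, so $x_p$ and $x_p^{-1}$ act by multiplication, $\Sym_N$ acts by permutation of the variables, and $z_p\in\H_N\subset\R_N$ acts the way $x_p\,y_p$ does, namely as $x_p$ times the operator \eqref{dp} --- which is exactly the operator displayed in the statement. By the second description of $\T_N$, it then remains only to note that the relations $\si\,x_p\,\si^{-1}=x_{\ts\si(p)}\ts$, \eqref{cross8} and \eqref{cross9} hold for these operators, and this is automatic because they hold in $\T_N\ts$.

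The step that needs care is the identification $\T_N\ot_{\R_N}P\cong\CC[\ts x_1\ts,x_1^{-1}\lcd x_N\ts,x_N^{-1}\ts]$; it relies on the multiplication-map decompositions of both $\R_N$ and $\T_N$, together with the fact --- implicit in the first description of $\T_N$ --- that $S$ is a two-sided Ore set in $\R_N\ts$. An alternative, avoiding localization altogether, would be to verify directly on $\CC[\ts x_1\ts,x_1^{-1}\lcd x_N\ts,x_N^{-1}\ts]$ that the displayed operators, the multiplications by $x_p^{\pm1}$ and the permutations satisfy \eqref{yrel}, \eqref{ycom}, \eqref{cross8} and \eqref{cross9}. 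Along that route the commutation relations \eqref{ycom} are the only nontrivial point, and they reduce, by the very computation carried out just before the statement with $x_p\,y_p$ in place of $z_p\ts$, to the pairwise commutativity of the Dunkl operators \eqref{dp}; but that commutativity must first be propagated from $\CC[\ts x_1\lcd x_N\ts]$ to the ring of Laurent polynomials, and doing so cleanly again amounts to the localization argument above.
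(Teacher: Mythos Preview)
Your argument is correct and follows the same route as the paper: the paper simply notes that the Dunkl operators \eqref{dp} extend to Laurent polynomials so that Proposition~\ref{1.3} still applies there, and then reads off the action of $z_p=x_p\,y_p\ts$; your module-theoretic localization of $P\cong\R_N/J$ is a more formal packaging of the same idea. One small point worth making explicit is why the abstract action of $y_p$ on $S^{-1}P$ coincides with the Dunkl formula \eqref{dp} on Laurent polynomials --- this follows because both operators satisfy the commutation relations \eqref{cross3},\eqref{cross4} with the $X_q$ and annihilate~$1$, which determines them uniquely on all Laurent monomials.
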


Note that in the second definition of the algebra 
$\T_N$ we can also employ the pairwise commuting generators 
$u_1\lcd u_N\in\H_N$ 
instead of the non-commuting generators $z_1\lcd z_N\ts$; 
see the definition \eqref{zp}. Then instead of the defining relations 
\eqref{cross8},\eqref{cross9} in $\T_N$ we get
\begin{align}
\label{cross5}
[\,u_{p}\,,x_q\,]&=-\,x_q\,\si_{pq}
\quad\text{for}\quad
q<p\,;
\\[4pt]
\label{cross6}
[\,u_{p}\,,x_q\,]&=-\,x_p\,\si_{pq}
\quad\text{for}\quad
q>p\,;
\\[4pt]
\label{cross7}
[\,u_{p}\,,x_p\,]&=\ka\,x_p+
\sum_{r<p}\,x_r\,\si_{pr}+
\sum_{r>p}\,x_p\,\si_{pr}\,.
\end{align}
According to Corollary \ref{1.4} the element $u_p\in\H_N$ then acts on 
$\CC[\ts x_1\ts,x_1^{-1}\lcd x_N\ts,x_N^{-1}\ts]$ as 
\begin{equation*}
\ka\,x_p\,\d_p+\sum_{r\neq p}\,\frac{x_p}{x_p-x_r}\,(1-\si_{pr})
+\sum_{r<p}\,\si_{pr}\,.
\end{equation*}

Corollary \ref{1.4} is obtained by extending the action of 
$\R_N$ given by Proposition \ref{1.3} 
from the space $\CC[\ts x_1\lcd x_N\ts]$
to the space of Laurent polynomials 
$\CC[\ts x_1\ts,x_1^{-1}\lcd x_N\ts,x_N^{-1}\ts]\,$.
On the latter space
the element $y_p\in\R_N$ still acts as the Dunkl
operator \eqref{dp}. We will now describe a generalization
of Corollary \ref{1.4}, which goes back to the work of Cherednik \cite{C1}.


\subsection{}
\label{sec:13}

Consider the affine Lie algebra $\glhat_m\,$. By definition,
this is a central extension of~the \textit{current Lie algebra\/}
$\gl_m\ts[\,t,t^{-1}\ts]$ by a one-dimensional vector space with a
fixed basis element which we denote by $C\ts$.
Here $t$ is a formal variable.
Choose the basis of $\gl_m\ts[\,t,t^{-1}\ts]$ consisting of the elements
$E_{cd}\,t^{\,j}$ where $c,d=1\lcd m$ whereas 
$j$ ranges over~$\ZZ\ts$.
The commutators in the Lie algebra $\gl_m\ts[\,t,t^{-1}\ts]$ 
are taken pointwise so that
$$
[\,E_{ab}\,t^{\,i},E_{cd}\,t^{\,j}\,]=
(\ts\de_{bc}\ts E_{ad}-\de_{cb}\,E_{cd}\ts)\,t^{\,i+j}
$$
for the basis elements. In the extended Lie algebra $\glhat_m$ 
by definition we have the relations
\begin{equation}
\label{hatcom}
[\,E_{ab}\,t^{\,i},E_{cd}\,t^{\,j}\,]=
(\ts\de_{bc}\,E_{ad}-\de_{ad}\,E_{cd}\ts)\,t^{\,i+j}+
i\,\de_{\ts i,-j}\,\de_{ad}\,\de_{bc}\,C\ts.
\end{equation}

\newpage

Let $V$ be any module of the Lie algebra $\glhat_m$ such that
for any given vector in $V$ 
there is a degree $i$
such that the subspace $t^{\,i}\,\gl_m[t]\subset\glhat_m$ 
annihilates the vector.
Note that here the meaning of the symbol $V$
is different from that in Section \ref{sec:1},
where it was used to denote a module over the finite-dimensional 
Lie algebra $\gl_m\,$. Now consider the vector space
\begin{equation}
\label{w}
W=\CC[\ts x_1\ts,x_1^{-1}\lcd x_N\ts,x_N^{-1}\ts]\ot
(\CC^{\ts m})^{\ot N}\ot\ts V\ts.
\end{equation}
Due to our condition on $V$ for any $p=1\lcd N$ 
there is a well defined linear operator on $W$
\begin{equation}
\label{gap}
\sum_{i=0}^\infty\, 
\sum_{a,b=1}^m\,
x_p^{\,-i}\ot E_{ab}^{\ts(p)}\ot E_{\ts ba}\,t^{\,i}\,.
\end{equation}
Here $E_{ab}^{\ts(p)}$ is the operator \eqref{epab} acting on
$(\CC^{\ts m})^{\ot N}\ts$. Further, 
the symmetric group $\Sym_N$ acts on the tensor factor
$\CC[\ts x_1\ts,x_1^{-1}\lcd x_N\ts,x_N^{-1}\ts]$ of $W$
by permutations of the $N$ variables.
There is another copy of the group $\Sym_N$ acting on 
the $N$ tensor factors $\CC^{\ts m}$ of $W$ by their permutations.
Using these two actions of~$\Sym_N$
for any $p=1\lcd N$ introduce the \textit{Cherednik operator} on $W$
\begin{gather}
\nonumber
\ka\,x_p\,\d_p\ot\id^{\,\ot N}\ot\id\,+\,
\sum_{r\neq p}\,\frac{x_p}{x_p-x_r}\,(1-\si_{pr})\ot\si_{pr}\ot\id\ +
\\
\label{up}
\sum_{i=0}^\infty\,
\sum_{a,b=1}^m\, 
x_p^{\,-i}\ot E_{ab}^{\ts(p)}\ot E_{\ts ba}\,t^{\,i}\,.
\end{gather}

The vector space
\begin{equation}
\label{u}
\CC[\ts x_1\ts,x_1^{-1}\lcd x_N\ts,x_N^{-1}\ts]\ot
(\CC^{\ts m})^{\ot N}
\end{equation}
can be naturally identified with the tensor product of $N$ copies
of the space $\CC^{\ts m}[\ts t\ts,t^{-1}\ts]\,$.
The latter space can be regarded as a $\glhat_m\ts$-module
where the central element $C$ acts as zero.
By taking the tensor product of $N$ copies of this module
with $V$ we turn the vector space 
$W$ to a $\glhat_m\ts$-module. 
The element $E_{cd}\,t^{\,j}$ of $\glhat_m$ acts on $W$ as
\begin{equation}
\label{ecdj}
\sum_{q=1}^N\,x_q^{\,j}\ot E_{cd}^{\ts(q)}\ot\id+
\id\ot\id^{\,\ot N}\ot E_{\ts cd}\,t^{\,j}\,.
\end{equation}

\vspace{-4pt}

We will also employ the affine Lie algebra $\slhat_m\,$.  
This is a subalgebra of $\glhat_m$ spanned by
the subspace $\sl_m\ts[\,t,t^{-1}\ts]\subset\gl_m\ts[\,t,t^{-1}\ts]$
and by the central elements $C\ts$. 
For any scalar $\ell\in\CC\,$, a module of the Lie algebra
$\glhat_m$ or $\slhat_m$ is said to be of \emph{level\/} $\ell$ 
if $C$ acts on this module as that scalar.
In particular, the $\glhat_m\ts$-module
$\CC^{\ts m}[\ts t\ts,t^{-1}\ts]$ used 
above was of level zero.
Note that by the defining 
relations \eqref{hatcom},
the subspaces
$\gl_m\subset\gl_m\ts[\,t,t^{-1}\ts]$ and
$\sl_m\subset\sl_m\ts[\,t,t^{-1}\ts]$ are
Lie subalgebras of $\glhat_m$ and $\slhat_m$ respectively.
Denote by $\q$ the subspace
$t^{-1}\,\sl_m\ts[\,t^{-1}\ts]\subset\sl_m\ts[\,t,t^{-1}\ts]\,$.  
This is a subalgebra of both $\slhat_m$ and $\glhat_m\,$.
Note that here the meaning of the symbol $\q$
is different from that in Section \ref{sec:1},
where it denoted some subalgebra of $\gl_m\,$.
We can now state the main properties
of Cherednik operators on $W$ due to \cite{AST,S2}.

\begin{prop}
\label{1.5}
{\rm(i)}
Using the\/ $\glhat_m\ts$-module structure on $V$, 
an action of the algebra\/ $\T_N$ on the vector space \eqref{w}
is defined as follows: 
the elements $x_p\,,x_p^{-1}\in\T_N$ act via mutiplication in\/
$\CC[\ts x_1\ts,x_1^{-1}\lcd x_N\ts,x_N^{-1}\ts]\,,$
the group $\Sym_N\subset\H_N$ acts 
by simultaneous permutations of the variables $x_1\lcd x_N$ and of the $N$ 
tensor factors\/ $\CC^{\ts m}$,
and the element $z_p\in\H_N$ acts as~\eqref{up}. 
\\
{\rm(ii)} 
This action of\/ $\T_N$ on $W$ commutes with the action of 
the Lie subalgebra\/ $\gl_m\subset\glhat_m\,$.
\\[1pt]
{\rm(iii)}
If\/ $V$ has level\/ $\ka-m$ then the action of\/ $\T_N$
preserves the subspace\, 
$\q\,W\subset W$.
\end{prop}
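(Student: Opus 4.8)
The plan is to verify each of the three assertions by a direct computation with the explicit operators, the only subtle point being part~(iii). For part~(i) I would check that the operators~\eqref{up}, call them $\zeta_p$ for $p=1\lcd N$, together with the multiplication operators by $x_p^{\pm1}$ and the permutation operators, satisfy all the defining relations of $\T_N$ in the $z$-generator presentation: the relations inside $\CC\ts\Sym_N$, the relations $\si\,x_p\,\si^{-1}=x_{\si(p)}$, the commutation relations \eqref{cross8},\eqref{cross9}, and the relations \eqref{yrel},\eqref{ycom} that govern the subalgebra $\H_N\ts$. The relations internal to the Laurent-polynomial ring and to $\CC\ts\Sym_N$ are immediate from the two permutation actions. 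For \eqref{yrel}, I would note that $\si_q$ simultaneously permutes the variables $x_p$, the tensor factors $\CC^{\ts m}$, and hence conjugates the first two summands of \eqref{up} correctly; the infinite sum in \eqref{up} transforms by $E^{\ts(p)}_{ab}\mapsto E^{\ts(\si_q(p))}_{ab}$ and $x_p^{-i}\mapsto x_{\si_q(p)}^{-i}$, and $E_{\ts ba}\,t^{\,i}$ is untouched, giving $\zeta_{\si_q(p)}$. The relations \eqref{cross8},\eqref{cross9} reduce, after separating variables, to the corresponding relations for the first two summands alone (which hold by Corollary~\ref{1.4}, since the third summand of \eqref{up} commutes with every $x_q$).

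The genuinely nontrivial part of~(i) is the relation \eqref{ycom}, i.e.\ $[\,\zeta_p\,,\zeta_q\,]=\si_{pq}(\zeta_p-\zeta_q)$. Here I would argue, exactly as in Corollary~\ref{1.4} and in \cite{C1,AST}, that $\zeta_p$ is the sum of the Cherednik--Dunkl operator from Corollary~\ref{1.4} (tensored with $\id$ on $V$) and the operator $\delta_p:=\sum_{i\ge0}\sum_{a,b}x_p^{-i}\ot E^{\ts(p)}_{ab}\ot E_{\ts ba}\,t^{\,i}$. The cross-terms $[\,$Cherednik part$_p$, $\delta_q\,]$ can be computed using only $[\,x_p\,\d_p\,,x_q^{-i}\,]$ and the conjugation rule \eqref{yrel} for the $\si_{pr}$'s; the term $[\,\delta_p\,,\delta_q\,]$ is handled by expanding the commutator $[\,E_{\ts ba}\,t^{\,i},E_{\ts dc}\,t^{\,j}\,]$ via \eqref{hatcom} — crucially the central term $i\,\de_{i,-j}C$ contributes here, but with $i,j\ge0$ it only survives when $i=j=0$, so $C$ does not actually enter at this stage, which is why part~(i) holds for $V$ of arbitrary level. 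Matching everything against the operator form of the right-hand side $\si_{pq}(\zeta_p-\zeta_q)$ is a bookkeeping exercise analogous to the verification of Proposition~\ref{1.1}; I would cite \cite[Section~1]{KN1} and \cite{AST} for the pattern and carry out the new ingredient (the terms involving $t$-degrees) explicitly. Part~(ii) is then routine: $\gl_m\subset\glhat_m$ acts on $W$ by the $j=0$ specialization of \eqref{ecdj}, namely $\sum_q\id\ot E^{\ts(q)}_{cd}\ot\id+\id\ot\id^{\ot N}\ot E_{\ts cd}$, which commutes with the Laurent polynomials and the permutations, and commutes with $\delta_p$ by the same $\gl_m$-invariance computation that proves Proposition~\ref{1.1}(ii) (the element $\sum_{a,b}E^{\ts(p)}_{ab}\ot E_{\ts ba}\,t^{\,i}$ is a $\gl_m$-invariant in $\End(\CC^{\ts m})^{\ot N}\ot\glhat_m$ for each $i$).

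For part~(iii) — the main obstacle — I would work modulo $\q\,W$ and show that $\zeta_p$ maps $\q\,W$ into itself when $C$ acts on $V$ as $\ka-m$. It suffices to compute, for a generator $E_{\ts cd}\,t^{-k}\in\q$ with $k\ge1$ and $c\ne d$ (or $c=d$ with the trace removed), the commutator of its action~\eqref{ecdj} on $W$ with $\zeta_p$, and check that the result again lies in the left ideal generated by $\q$. Writing $\Xi_{cd}^{-k}$ for the operator~\eqref{ecdj} at $j=-k$, one has $[\,\zeta_p\,,\Xi_{cd}^{-k}\,]$ splitting into a contribution from the Cherednik part and a contribution from $\delta_p$. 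The Cherednik part's bracket produces, via $[\,x_p\,\d_p\,,x_q^{\,-k}\,]=-k\,x_q^{\,-k}\de_{pq}$ and the conjugation of the $\si_{pr}$'s, an operator that is visibly of the form (element of $\q$ acting)~$\cdot$~(bounded operator) plus terms that vanish modulo $\q\,W$. The bracket $[\,\delta_p\,,\Xi_{cd}^{-k}\,]$ is where the level enters: expanding with \eqref{hatcom}, the terms with $i-k\ge0$ remain in $t^{\ge0}\gl_m[t]$-degree and contribute to $\q\,W$ or to lower-order terms, while the term with $i=k$ in the central part yields $k\,\de_{\dots}C=k(\ka-m)$ times a lower-degree operator; the terms with $i<k$ produce elements of $\q$ (negative $t$-degree) times bounded operators. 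The point is that the "anomalous" contributions — those not obviously in $\q\,W$ — must cancel, and this cancellation is exactly where the value $\ka-m$ of the level is forced: the $\ka$ comes from the $\ka\,x_p\,\d_p$ term of the Cherednik operator and the $-m$ from the partial trace $\sum_{a}E^{\ts(p)}_{aa}$ acting as the scalar $1$ on each of the $N$ factors... more precisely from the reordering $E_{\ts ba}\,t^i\cdot E_{\ts ab}\,t^{-k}$ generating $E_{\ts bb}$-type terms whose sum over $a$ is $m$ times a projection. I would organize this as: (1) reduce to checking on the cyclic subspace $\CC[\ts x^{\pm1}\ts]\ot(\CC^{\ts m})^{\ot N}\ot V$; (2) compute $[\,\zeta_p\,,\Xi_{cd}^{-k}\,]$ term by term using \eqref{hatcom}; (3) collect the terms that are not manifestly in $\q\,W$ and show their sum is $\bigl((\ka-m)-C\bigr)$ times a well-defined operator, hence zero on a level $\ka-m$ module. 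I expect step~(3), the precise identification and cancellation of these boundary terms, to be the crux; following \cite{S2} I would set it up by first rewriting $\zeta_p$ modulo $\q\,W$ in a normal-ordered form in which only $t^{\ge0}$ modes and finitely many correction terms appear, which makes the role of the level transparent.
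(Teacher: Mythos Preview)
For part (i) your plan works in principle but differs from the paper's route and understates the labour. The paper does not verify \eqref{ycom} for the operators \eqref{up} directly; instead it writes $\zeta_p=X_p\,Y_p$ and checks the relations of the \emph{rational} Cherednik algebra $\R_N$ for $X_p,Y_p$ --- in particular that the $Y_p$ pairwise \emph{commute}, a simpler target than \eqref{ycom}. Commutativity is proved via a decomposition $Y_p=\ka\,D_p-R_p+T_p$ (on an extension to rational functions) in which $T_p$ packages the term $\sum_{r\neq p}(x_p-x_r)^{-1}\ot\si_{pr}\ot\id$ together with the infinite sum, so that $[\,T_p,T_q\,]=0$ follows uniformly from the classical Yang--Baxter equation for the $r$-matrix \eqref{r}. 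Your splitting $\zeta_p=A_p+\delta_p$ has no such structure (neither family commutes among itself), and your appeal to Corollary~\ref{1.4} for the $A_p$-part is not literally correct: Corollary~\ref{1.4} concerns the Laurent polynomial ring alone, with no $(\CC^{\ts m})^{\ot N}$ factor carrying its own $\si_{pr}$.

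Part (iii) has a genuine gap. Your expected mechanism --- that the terms not manifestly in $\q\,W$ assemble into $\bigl((\ka-m)-C\bigr)\times(\ldots)$ and hence vanish --- is not what occurs. At level $\ka-m$ the Dunkl contribution $\ka\,j\,x_1^{\,j-1}\ot E_{cd}^{(1)}\ot\id$ and the central-extension contribution combine to $m\,j\,x_1^{\,j-1}\ot E_{cd}^{(1)}\ot\id$, which does \emph{not} vanish. More seriously, none of the remaining commutator terms lie manifestly in $\q\,W$ either: they involve $\glhat_m$ acting on a \emph{single} tensor factor (one copy of $\CC^{\ts m}[t,t^{-1}]$, or $V$ alone) rather than the diagonal action $\th$ whose image defines $\q\,W$. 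The paper's key device is an explicit antisymmetric element $J\in\q\wedge\q$ (depending on $c,d,j$) together with the map $\om(P\ot Q)=\th(P)\,\th_1(Q)$, and the recognition that the full commutator $[\,Y_1\,,\th(E_{cd}\,t^{\,j})\,]$ equals $\de_{cd}\,j\,x_1^{\,j-1}\ot\id^{\ot N}\ot\id+\om(J)$. The $\de_{cd}$-term drops out on restriction to $\q\subset t^{-1}\sl_m[t^{-1}]$, and $\om(J)$ lies in $\th(\q)\cdot\End W$ by construction. The surviving $m\,j$ term above is exactly the summand $\om_1(J)$ needed to complete $\om_2(J)+\cdots+\om_{N+1}(J)$ to $\om(J)$; without this reorganization the cancellation you anticipate does not materialize.
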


In the next section we will give a proof of this proposition
since many details have been omitted in \cite{AST,S2}. 
By inspecting that proof we will also get a version
of Proposition \ref{1.5} for the vector space \eqref{w} where
the tensor factor $V$ is a module not of $\glhat_m$ but
only of $\slhat_m\,$. 
In the latter case, for any vector in $V$
we still assume the existence of 
a degree $i$ such that the subspace $t^{\,i}\,\sl_m[t]\subset\slhat_m$
annihilates the vector.
Due to \eqref{i}, an action of the sum
\begin{equation}
\label{gaps}
\sum_{i=0}^\infty\,
x_p^{\,-i}\ot
\Bigl(\,
\sum_{a,b=1}^m\,
E_{ab}^{\ts(p)}\ot E_{\ts ba}\,t^{\,i}-
\frac1m\ \id^{\,\ot\ts N}\ot I\,t^{\,i}
\,\Bigr)
\end{equation}
can be then defined on the vector space \eqref{w} by using only the
$\slhat_m\ts$-module structure of $V$. Then for every $p=1\lcd N$ we 
get a modification of the Cherednik operator \eqref{up}
on $W$,
\begin{gather}
\nonumber
\ka\,x_p\,\d_p\ot\id^{\,\ot N}\ot\id\,+\,
\sum_{r\neq p}\,\frac{x_p}{x_p-x_r}\,(1-\si_{pr})\ot\si_{pr}\ot\id\ +
\\
\label{ups}
\sum_{i=0}^\infty\,
x_p^{\,-i}\ot
\Bigl(\,
\sum_{a,b=1}^m\,
E_{ab}^{\ts(p)}\ot E_{\ts ba}\,t^{\,i}-
\frac1m\ \id^{\,\ot\ts N}\ot I\,t^{\,i}
\,\Bigr)
\,.
\end{gather}
Here we use the sum \eqref{gaps} instead of 
the sum \eqref{gap} used in 
\eqref{up}. Further, we can turn the vector space
\eqref{w} into another $\slhat_m\ts$-module by regarding \eqref{u}
as $\slhat_m\ts$-module of level~zero. 

\begin{cor}
\label{1.6}
{\rm(i)}
Using the\/ $\slhat_m\ts$-module structure on $V$, 
an action of the algebra\/ $\T_N$ on the vector space \eqref{w}
can be defined as follows: 
the elements $x_p\,,x_p^{-1}\in\T_N$ act via mutiplication in\/
$\CC[\ts x_1\ts,x_1^{-1}\lcd x_N\ts,x_N^{-1}\ts]\,,$
the group $\Sym_N\subset\H_N$ acts 
by simultaneous permutations of the variables $x_1\lcd x_N$ and of the $N$ 
tensor factors\/ $\CC^{\ts m}$,
and the element $z_p\in\H_N$ acts as~\eqref{ups}. 
\\
{\rm(ii)} 
This action of\/ $\T_N$ on $W$ commutes with the action of 
the Lie subalgebra\/ $\sl_m\subset\slhat_m\,$.
\\[1pt]
{\rm(iii)}
If\/ $V$ has level\/ $\ka-m$ then the action of\/ $\T_N$
preserves the subspace\, 
$\q\,W\subset W$.
\end{cor}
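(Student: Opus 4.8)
The plan is to deduce Corollary \ref{1.6} from Proposition \ref{1.5} rather than reprove everything from scratch, exploiting the decomposition $\gl_m=\sl_m\op\CC\,I$ and the fact that $I$ is central. First I would observe that the modified Cherednik operator \eqref{ups} differs from \eqref{up} only by the scalar correction term
$$
-\,\frac1m\,\sum_{i=0}^\infty\,x_p^{\,-i}\ot\id^{\,\ot N}\ot I\,t^{\,i}\,,
$$
coming from \eqref{i}. The key point is that when $V$ carries only an $\slhat_m\ts$-module structure, this subtraction is exactly what is needed to make \eqref{ups} well defined on $W$ using only that structure; and when $V$ happens to be a $\glhat_m\ts$-module, subtracting a fixed scalar operator (commuting with everything in sight because $I\,t^{\,i}$ is central in $\glhat_m$ by \eqref{hatcom}, and the $x_p$-parts obviously commute among themselves) still yields operators satisfying the same commutation relations. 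So for $\glhat_m\ts$-modules, Corollary \ref{1.6} follows from Proposition \ref{1.5} by pulling the $\T_N$-action back through the automorphism of $\T_N$ that shifts the generators $z_p$ (equivalently $u_p$) by the central correction, analogous to the automorphism $u_p\mapsto u_p+f$ used in Subsection \ref{sec:115}. For general $\slhat_m\ts$-modules $V$, I would note that both the statement and its proof are ``local'' in $V$: every vector of $V$ is annihilated by some $t^{\,i}\,\sl_m[t]$, and all the operators involved are defined by the same formulas, so the verification of relations \eqref{cross5}--\eqref{cross7} (equivalently \eqref{ycom}, \eqref{yrel}) reduces to a computation that takes place inside $\U(\slhat_m)$ acting on $V$, with the extra scalar term contributing only commuting summands.

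For part (i), I would verify that the operators \eqref{ups} together with the permutation action of $\Sym_N$ and multiplication by $x_p^{\,\pm1}$ satisfy the defining relations of $\T_N$ in the $z_p$-form, namely \eqref{yrel}, \eqref{ycom}, \eqref{cross8} and \eqref{cross9}. Relation \eqref{yrel} is immediate because the correction term is manifestly $\Sym_N$-equivariant (the group permutes the $x_p^{\,-i}$ and the superscripts $(p)$ simultaneously, and $I\,t^{\,i}$ is untouched). For \eqref{ycom}, \eqref{cross8}, \eqref{cross9} I would subtract the Proposition \ref{1.5} computation from the Corollary \ref{1.6} one: the difference is governed entirely by the scalar-valued operator $\sum_i x_p^{\,-i}\ot\id^{\,\ot N}\ot I\,t^{\,i}$, whose commutators with $x_q$, with $\si_{pr}$, and with the other correction terms are easy to track. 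The decomposition \eqref{i}, restated at the level of currents as the identity
$$
\sum_{a,b=1}^m E_{ab}\,t^{\,i}\ot E_{\ts ba}\,t^{\,j}\,-\,\text{(central terms)}\ \in\ \frac1m\,I\,t^{\,i}\ot I\,t^{\,j}\,+\,\sl_m\ts[\,t,t^{-1}\ts]\ot\sl_m\ts[\,t,t^{-1}\ts]\,,
$$
is what guarantees that the formula \eqref{ups} only uses the $\sl_m$-part of $V$; I would spell this out to justify that \eqref{gaps} is well defined.

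Part (ii) is where I would use centrality most directly. The diagonal action of $\sl_m\subset\slhat_m$ on $W$ is by $\sum_q 1\ot E_{cd}^{\ts(q)}\ot\id+\id\ot\id^{\,\ot N}\ot E_{cd}$ (the $j=0$ case of \eqref{ecdj}), and Proposition \ref{1.5}(ii) already shows the $\gl_m$-action commutes with the operators \eqref{up}; restricting to $\sl_m$ and noting that the extra correction term in \eqref{ups} is scalar (hence commutes with the $\sl_m$-action), commutativity is inherited. The subtlety here, and the one place I would be careful, is that in the $\slhat_m$-only setting I cannot invoke Proposition \ref{1.5}(ii) verbatim since $V$ need not be a $\glhat_m$-module; instead I would redo the bracket $[\,E_{cd}\ts\text{-action}\,,\ \eqref{ups}\,]$ directly, observing that the sum $\sum_{a,b}E_{ab}^{\ts(p)}\ot E_{ba}\,t^{\,i}-\frac1m\id^{\,\ot N}\ot I\,t^{\,i}$ is, up to the central correction, the image of the Casimir-type tensor, and its $\sl_m$-invariance follows from the $\ad$-invariance of \eqref{i}.

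For part (iii), I would mimic whatever argument the proof of Proposition \ref{1.5}(iii) uses (to appear in Section 3), replacing the operator \eqref{gap} by \eqref{gaps} throughout. The hypothesis that $V$ has level $\ka-m$ enters when one commutes the current-algebra summands of \eqref{ups} past elements of $\q=t^{-1}\,\sl_m[t^{-1}]$: the central term of \eqref{hatcom} produces a multiple of $C=\ka-m$, and this is precisely calibrated so that the unwanted contributions telescope into $\q\,W$. The scalar correction $-\frac1m\,I\,t^{\,i}$ contributes nothing obstructive here because $I$ is central, so $[\,I\,t^{\,i},\,\sl_m\ts t^{\,-k}\,]=0$ for all $i,k$ (there is no central term in \eqref{hatcom} when one factor is $I$ and the other lies in $\sl_m$, since $\de_{ad}\de_{bc}$ forces $a=d,b=c$ and then traceless-ness kills it); thus the image of $\q$ is preserved by the same mechanism as in Proposition \ref{1.5}(iii). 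The main obstacle I anticipate is purely bookkeeping: keeping the three distinct tensor slots (Laurent polynomials / $(\CC^{\ts m})^{\ot N}$ / the module $V$) and the two copies of $\Sym_N$ straight while checking \eqref{ycom}, and making sure the infinite sums over $i$ are manipulated legitimately — which they are, because the $\glhat_m$- or $\slhat_m$-module hypothesis on $V$ truncates every sum when applied to a fixed vector.
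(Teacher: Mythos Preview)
Your overall strategy is the same as the paper's: derive Corollary \ref{1.6} from Proposition \ref{1.5} by tracking how the correction term \eqref{subtr} perturbs the proof. The paper carries this out at the level of the $\R_N$-generators, writing $Y_p=\ka\,D_p-R_{\ts p}+T_p$ and checking that subtracting \eqref{subtr} from $T_p$ leaves Lemmas \ref{3.1}--\ref{3.5} intact; it then observes (end of Subsections \ref{sec:31} and \ref{sec:32}) that all equalities survive the modification. Your argument for part (iii) matches the paper's exactly: the extra commutator coming from \eqref{subtr} is a multiple of $\de_{cd}$, hence vanishes on $\q$ by tracelessness.

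Two points need correction. First, the ``automorphism of $\T_N$'' idea is not valid. The correction $-\frac1m\sum_i x_p^{-i}\ot\id^{\ot N}\ot I\,t^{\,i}$ is an operator on $W$ depending on $x_p$ and on the Heisenberg action on $V$; it is not an element of $\T_N$, and even if $I\,t^{\,i}$ acted by scalars, a shift of $z_p$ by a nonconstant Laurent polynomial in $x_p$ does not define an automorphism of $\T_N$ (it fails \eqref{ycom}). The paper does not invoke any such automorphism; it simply re-inspects the lemmas with the modified $T_p\,$.

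Second, your claim that the correction ``commutes with everything in sight'' is too strong. Writing $S_p'$ for \eqref{subtr}, one does have $[D_q,S_p']=0$ for $q\neq p$ and $[T_q,S_p']=0$, so Lemmas \ref{3.2} and \ref{3.4} go through trivially. But $[R_{\ts p},S_q']\neq0$: the first tensor factor contributes the nonvanishing $[\,\frac1{x_p-x_q}\,\si_{pq}\,,\,x_q^{-j-1}\,]$. What actually holds is the symmetric cancellation $[R_{\ts p},S_q']+[S_p',R_{\ts q}]=0$, which is precisely what Lemma \ref{3.5} requires. Your later phrasing (``easy to track'') is the right attitude; just be aware that you are verifying cancellations in pairs, not individual commutativity, and that the paper's $D_p,R_p,T_p$ decomposition organizes this more cleanly than working directly with $z_p\,$.
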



\subsection{}
\label{sec:14}

By using Corollary \ref{1.6}(i) and the definition \eqref{w} 
we get a functor
$
\A_N: V\mapsto W
$
from the category of all $\slhat_m\ts$-modules 
satisfying the annihilation condition stated just before 
\eqref{gaps}. Note that the resulting 
actions of $\slhat_m$ and $\T_N$ on $W$ do not commute in general.
However, this will be our analogue for $\slhat_m$ of the functor $\F_N$
introduced in the end of Subsection~\ref{sec:11}.
Let us now relate the two functors to each other.

For any given $\sl_m\ts$-module $U$ and
$\ell\in\CC$ let $V$ be the $\slhat_m\ts$-module
of level $\ell$ {parabolically induced} from $U$. 
Note that the subspace of $\slhat_m$
spanned by $\sl_m[t]$ and by the central element $C$ is a Lie subalgebra.
Denote this subalgebra by $\p\,$.
To define $V$ we first extend the action on $U$ from $\sl_m$ to $\p\,$
so that all the elements of
$t\,\sl_m[t]$ act as zero, while $C$ acts as the scalar
$\ell\,$. By definition, $V$ is the $\slhat_m\ts$-module
induced from the $\p\ts$-module~$U$. 
Since the element $C$ is central in $\slhat_m\,$, 
the module $V$ indeed has level $\ell\,$. 
It also satisfies the annihilation condition 
mentioned in the previous paragraph.

Let us apply the functor $\A_N$ to this $V$. 
Using the notation $\q=t^{-1}\,\sl_m\ts[\,t^{-1}\ts]$ from 
the previous subsection, we have a vector space decomposition
$\slhat_m=\p\op\q\,$.
Note again that here the meaning of the symbols $\p$ and $\q$
is different from that in Section \ref{sec:1} but similar. 
Consider the space $\A_N(V)_{\ts\q}$ of {coinvariants} of
$\A_N(V)$ relative to the action of the subalgebra $\q\subset\slhat_m\,$.
This space is the quotient of the vector space
$\A_N(V)$ by the image of the action of~$\q\,$.
The subalgebra $\sl_m\subset\slhat_m$ acts on this quotient,
because the adjoint action of this subalgebra on $\slhat_m$ preserves $\q\,$. 
If $\,\ell=\ka-m$ then due to Corollary \ref{1.6}(iii)
the algebra $\T_N$ also acts on this quotient. Moreover, 
by Corollary \ref{1.6}(ii) 
the latter action commutes with the action of $\sl_m\,$. 
Thus the space of $\q\ts$-coinvariants $\A_N(V)_{\ts\q}$ becomes a bimodule 
over $\sl_m$ and $\T_N\,$.

On the other hand, we can apply the functor $\F_N$ 
to the $\sl_m\ts$-module $U$ as given above. 
Hence we obtain a bimodule $\F_N(U)$ of $\sl_m$ and of the 
degenerate affine Hecke algebra $\H_N\ts$, see Corollary \ref{1.2}(i,ii).
Since $\H_N$ is a subalgebra of 
$\T_N\ts$, we have the induction functor
$\Ind_{\,\H_N}^{\,\T_N}\,$.
If we apply it to any bimodule
over $\sl_m$ and\/ $\H_N\,$, we will get a bimodule 
over $\sl_m$ and\/ $\T_N\,$.  

\begin{theorem}
\label{1.7}
Let\/ $\ell=\ka-m\,$. \!Then the bimodule $\A_N(V)_{\ts\q}$ of\/
$\sl_m$ and\/ $\T_N$ is equivalent~to\/
\begin{equation}
\label{indf}
\Ind_{\,\H_N}^{\,\T_N}\F_N(U)\,.
\end{equation}
\end{theorem}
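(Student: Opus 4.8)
The plan is to realize the induced module \eqref{indf} and the space of coinvariants $\A_N(V)_{\ts\q}$ as one and the same vector space, carrying the same actions of $\T_N$ and of $\sl_m\,$. Write $\q=t^{-1}\,\sl_m[\,t^{-1}\ts]$ and recall from Subsection~\ref{sec:14} the vector space decomposition $\slhat_m=\p\op\q\,$. By the Poincar\'e--Birkhoff--Witt theorem the underlying vector space of the parabolically induced module $V=\U(\slhat_m)\ot_{\U(\p)}U$ is $\U(\q)\ot U\,$, on which $\q$ acts by left multiplication in the first tensor factor. Hence by \eqref{w} the vector space $W$ of $\A_N(V)$ is $\CC[\ts x_1\ts,x_1^{-1}\lcd x_N\ts,x_N^{-1}\ts]\ot(\CC^{\ts m})^{\ot N}\ot\U(\q)\ot U\,.$

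First I would identify the coinvariants $\A_N(V)_{\ts\q}=W/\q\,W$ with $\CC[\ts x_1\ts,x_1^{-1}\lcd x_N\ts,x_N^{-1}\ts]\ot(\CC^{\ts m})^{\ot N}\ot U$ via the linear map $\io$ sending the latter space into the subspace $\CC[\ts x_1\ts,x_1^{-1}\lcd x_N\ts,x_N^{-1}\ts]\ot(\CC^{\ts m})^{\ot N}\ot1\ot U$ of $W$ and then passing to the quotient. Its bijectivity follows by the filtration argument in the proof of Theorem~\ref{1.25}: grade $\U(\q)$ by minus the degree in $t\,$, so that $\U(\q)_{\ts0}=\CC\,$; the induced ascending filtration on $W$ is respected by the action of $\q\,$, and since the summand $\sum_q x_q^{\,-i}\ot E_{ab}^{\ts(q)}\ot\id$ in the action of $E_{ab}\,t^{-i}$ does not raise the degree in $\U(\q)\,$, the associated graded action of $\q$ on $\operatorname{gr}W$ is by left multiplication in the tensor factor $\U(\q)$ alone. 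As $\U(\q)$ is free as a module over itself, taking $\q\ts$-coinvariants strips off that factor and $\operatorname{gr}\io$ becomes the identity; hence $\io$ is bijective. The same description of the action of $\q$ shows that $\io$ intertwines the diagonal action of $\sl_m\subset\p$ and the permutation action of $\Sym_N\,$.

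Next I would check that the map $\bar\jmath\colon\F_N(U)\to\A_N(V)_{\ts\q}\,$, $\xi\mapsto\io(\ts1\ot\xi\ts)\,$, which embeds $\F_N(U)=(\CC^{\ts m})^{\ot N}\ot U$ as the part of $\A_N(V)_{\ts\q}$ constant in the $x$ variables, is a homomorphism of $\H_N\ts$-modules for the action of Corollary~\ref{1.2} on $\F_N(U)\,$. Both actions of $\Sym_N$ permute the factors $\CC^{\ts m}\,$, so only the generators $z_p$ need to be treated, and here the point is that the Cherednik operator \eqref{ups} maps the subspace $1\ot(\CC^{\ts m})^{\ot N}\ot1\ot U$ of $W$ into itself, acting on it precisely as \eqref{yacts}. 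Indeed $\ka\,x_p\,\d_p$ and each operator $\frac{x_p}{x_p-x_r}\ts(1-\si_{pr})$ kill constant Laurent polynomials; by \eqref{i} the $i=0$ summand of \eqref{ups}, restricted to $(\CC^{\ts m})^{\ot N}\ot1\ot U\,$, equals \eqref{yacts}; and for $i\ge1$ that summand is, again by \eqref{i}, a linear combination of operators $E_{ab}^{\ts(p)}\ot(X\,t^{\,i})$ with $X\in\sl_m\,$, which annihilate $1\ot U\subset V$ because $t\,\sl_m[t]\subset\p$ acts on $U$ as zero. By the universal property of induction, $\bar\jmath$ then extends uniquely to a homomorphism of $\T_N\ts$-modules
$$
\Phi\colon\Ind_{\,\H_N}^{\,\T_N}\F_N(U)\longrightarrow\A_N(V)_{\ts\q}\ts,
\qquad
\theta\ot\xi\longmapsto\theta\cdot\bar\jmath(\xi)\ts.
$$
Here the hypothesis $\ell=\ka-m$ enters only through Corollary~\ref{1.6}(iii), which makes $\q\,W$ stable under $\T_N$ and hence $\A_N(V)_{\ts\q}$ a $\T_N\ts$-module at all; and $\Phi$ is automatically $\sl_m\ts$-equivariant, since $\sl_m$ commutes with $\H_N$ on $\F_N(U)$ by Corollary~\ref{1.2}(ii) and with $\T_N$ on both modules by Corollary~\ref{1.6}(ii).

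It remains to prove that $\Phi$ is bijective. The image of $\Phi$ is a $\T_N\ts$-submodule of $\A_N(V)_{\ts\q}$ containing $\io(\ts1\ot\F_N(U)\ts)$; since the elements $x^\lambda\in\T_N$ act on $\A_N(V)_{\ts\q}$ by multiplication in the first tensor factor, this submodule contains all $\io(\ts x^\lambda\ot\F_N(U)\ts)\,$, which span $\A_N(V)_{\ts\q}\,$, so $\Phi$ is onto. For injectivity one uses that the multiplication map $\CC[\ts x_1\ts,x_1^{-1}\lcd x_N\ts,x_N^{-1}\ts]\ot\H_N\to\T_N$ is surjective --- an easy consequence of the multiplication bijection for $\T_N$ from Subsection~\ref{sec:12} and of the relation $z_p=x_p\,y_p\,$, which lets each $y_p$ be moved past the Laurent polynomials at the cost of terms of lower degree in $y\,$. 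Hence every element of $\Ind_{\,\H_N}^{\,\T_N}\F_N(U)$ can be written as a finite sum $\sum_\lambda x^\lambda\ot_{\H_N}\xi_\lambda$ with $\xi_\lambda\in\F_N(U)\,$; if $\Phi$ maps it to zero, then $\io\bigl(\sum_\lambda x^\lambda\ot\xi_\lambda\bigr)=0$ with the tensor product over $\CC\,$, so $\sum_\lambda x^\lambda\ot\xi_\lambda=0$ by the bijectivity of $\io\,$, whence every $\xi_\lambda=0$ and the original element vanishes. I expect the main obstacle to be the middle step: verifying carefully that \eqref{ups} really does preserve the subspace of vectors that are constant in the $x$ variables and lie in $1\ot U\,$, and reduces there to the degenerate affine Hecke operator \eqref{yacts}. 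Everything else is bookkeeping around the isomorphism $\io\,$, patterned on the proof of Theorem~\ref{1.25}.
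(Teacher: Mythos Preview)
Your proof is correct and follows essentially the same route as the paper's: identify $V$ with $\U(\q)\ot U$, define the linear isomorphism $\io$ onto $W/\q\,W$ via a filtration argument, and then verify $\H_N$-equivariance on the cyclic subspace $1\ot(\CC^{\ts m})^{\ot N}\ot U$ by observing that the Cherednik operator \eqref{ups} collapses there to \eqref{yacts}. The only cosmetic differences are that the paper filters by total degree in $x_1\lcd x_N$ rather than by degree in $\U(\q)$, and that it checks $\T_N$-equivariance of $\io$ directly instead of passing through the universal property of induction; both lead to the same verification.
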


\begin{proof}
Using the second definition of the algebra $\T_N$
and the definition of the functor $\F_N\ts$, the vector space of the 
bimodule \eqref{indf} can be identified with the tensor product  
\begin{equation}
\label{indx}
\CC[\ts x_1\ts,x_1^{-1}\lcd x_N\ts,x_N^{-1}\ts]\ot
(\CC^{\ts m})^{\ot N}\ot U\,.
\end{equation}
The subalgebra 
$\CC[\ts x_1\ts,x_1^{-1}\lcd x_N\ts,x_N^{-1}\ts]\subset\T_N$
acts on \eqref{indx} via
left multiplication in the first tensor factor. Further,
the subalgebra $\H_N\subset\T_N$ acts on \eqref{indx}
preserving the subspace 
$1\ot (\CC^{\ts m})^{\ot N}\ot U$.
The action of $\H_N$ on this subspace is determined by naturally 
identifying it with $(\CC^{\ts m})^{\ot N}\ot U$, see Corollary \ref{1.2}(i).
All this defines the action of $\T_N$ on \eqref{indx}. Note that
$\sl_m$ acts diagonally on the $N$ tensor factors 
$\CC^{\ts m}$ and on the last tensor factor $U$ of~\eqref{indx}. 

On the other hand, the 
vector space of the $\slhat_m\ts$-module $V$
can be identified with 
$\U(\q)\ot U$ whereon the Lie subalgebra $\q\subset\slhat_m$ acts via
left multiplication in the first tensor factor.
The vector space of the given $\sl_m\ts$-module $U$
gets identified with the subspace 
\begin{equation}
\label{subu}
1\ot U\subset \U(\q)\ot U\ts.
\end{equation}
Then by the definition of a parabolically induced module of level $\ell\ts$, 
on this subspace\ts: 
any element of the subalgebra $t\,\sl_m[t]\subset\slhat_m$ 
acts as zero, the subalgebra
$\sl_m\subset\slhat_m$ acts via its defining action on $U$,
the element $C$ of $\slhat_m$ acts
as the the scalar $\ell\ts$. 
All this determines the action of the Lie algebra
$\slhat_m$ on our $V=\U(\q)\ot U$.

Now consider the module $W=\A_N(V)$ over $\slhat_m$ and $\T_N\,$.
Using the above identification,
\begin{equation}
\label{uu}
W=\CC[\ts x_1\ts,x_1^{-1}\lcd x_N\ts,x_N^{-1}\ts]\ot
(\CC^{\ts m})^{\ot N}\ot\ts \U(\q)\ot U
\end{equation}
as a vector space. 
The corresponding space of $\q\ts$-coinvariants is isomorphic to \eqref{indx}.
Indeed, we can define a bijective linear mapping $\io$ from 
\eqref{indx} to the quotient vector space $W\ts/\ts\q\,W$ as follows.
First we map \eqref{indx} to the subspace
$$
\CC[\ts x_1\ts,x_1^{-1}\lcd x_N\ts,x_N^{-1}\ts]\ot
(\CC^{\ts m})^{\ot N}\ot\ts 1\ot U\subset W
$$
in the natural way, and then regard the image of the latter map 
modulo $\q\,W$. To prove the bijectivity of the resulting mapping 
$\io\,$, consider the ascending $\ZZ\ts$-filtration on the 
space
\eqref{u} defined by the total degree in the variables $x_1\lcd x_N\,$. 
The action of the subalgebra
$\q\subset\slhat_m$ preserves 
the filtration. 
Moreover, the corresponding graded action of $\q$ is trivial.
Hence this filtration on \eqref{u}
induces an ascending filtration on $W$ such that the
corresponding graded action of $\q$ is via left multiplication in the
tensor factor $\U(\q)$ of \eqref{uu}.
The bijectivity of the mapping 
$\io$ now follows from 
the Poincar\'e-Birkhoff-Witt theorem for the 
algebra $\U(\q)\ts$.

Since the subalgebra $\sl_m\subset\slhat_m$ acts on the subspace \eqref{subu}
through its defining action on $U$, the mapping $\io$ is 
$\sl_m\ts$-equivariant. Further, the subalgebra
$\CC[\ts x_1\ts,x_1^{-1}\lcd x_N\ts,x_N^{-1}\ts]\subset\T_N$
acts on both \eqref{indx} and $W$ via left multiplication
in itself as their tensor factor. Hence
the mapping $\io$ is equivariant for this subalgebra. 
But relative to the action of this subalgebra on the vector space
\eqref{indx}, the subspace 
$1\ot (\CC^{\ts m})^{\ot N}\ot U$ is cyclic.
To complete the proof of Theorem \ref{1.7} 
it now remains to check
the $\H_N\ts$-equivariance of the restriction of the mapping
$\io$ to this subspace of \eqref{indx}.
Here $\H_N$ is regarded as a subalgebra of $\T_N\,$.

The image of the subspace $1\ot (\CC^{\ts m})^{\ot N}\ot U$ 
of \eqref{indx} under $\io$ is the subspace
\begin{equation}
\label{11u}
1\ot(\CC^{\ts m})^{\ot N}\ot\ts 1\ot U\subset W
\end{equation}
regarded modulo $\q\,W$. The action of the symmetric group 
$\Sym_N\subset\H_N$ on both the subspace of \eqref{indx}
and its image under $\io$ 
is via permutations of the $N$ tensor factors $\CC^{\ts m}\ts$.
This action of $\Sym_N$ obviously commutes with $\io\,$.
Now consider the elements $z_1\lcd z_N\in\H_N$
which together with $\Sym_N$ generate the algebra $\H_N$. 
According to Corollary \ref{1.2}(i) for any $p=1\lcd N$
the generator $z_p$ acts on
the subspace $1\ot (\CC^{\ts m})^{\ot N}\ot U$ of \eqref{indx} as 
$$
\sum_{a,b=1}^m 
\id\ot E_{ab}^{\ts(p)}\ot E_{\ts ba}-\frac1m\ \id\ot\id^{\,\ot\ts N}\ot I\,.
$$

On the other hand, according to Corollary \ref{1.6}(i) 
the generator $z_p$ acts on $W$ as \eqref{ups}.
When applied to elements of the subspace \eqref{11u},
all summands displayed in the first line of \eqref{ups} vanish,
because here the derivation $\d_p$ and the difference $1-\si_{pr}$
are applied to the constant function
$1\in\CC[\ts x_1\ts,x_1^{-1}\lcd x_N\ts,x_N^{-1}\ts]\,$.
The summands in the second line of 
\eqref{ups} corresponding to the indices 
$i=1,2,\ldots$ also vanish on the subspace \eqref{11u},
because the last tensor factor of any of these summands
belongs to the subalgebra $t\,\sl_m[t]\subset\slhat_m\,$. 
The remaining summands of \eqref{ups} correspond to 
$i=0\,$. Their sum acts on the subspace~\eqref{11u}~as 
$$
\sum_{a,b=1}^m 
\id\ot E_{ab}^{\ts(p)}\ot\id\ot E
_{\ts ba}-\frac1m\ \id\ot\id^{\,\ot\ts N}\ot\id\ot I\,.
$$
Hence the restriction of 
$\io$ 
to the subspace $1\ot (\CC^{\ts m})^{\ot N}\ot U$ 
of \eqref{indx} is $z_p\ts$-equivariant.  
\end{proof}


\section{Proof of Proposition \ref{1.5}}
\medskip


\subsection{}
\label{sec:31}

To prove the part (i) of Proposition \ref{1.5}, we will use the
definition of the algebra $\T_N$ as the ring of fractions of the algebra
$\R_N$ relative to the set of denominators $x_1\lcd x_N\ts$. 
Let us denote by $X_p$ the operator
of multiplication by the variable $x_p$ in the first tensor factor of
the vector space \eqref{w}. Let us further denote by $Y_p$ the operator
\begin{gather*}
\ka\,\d_p\ot\id^{\,\ot N}\ot\id\,+\,
\sum_{r\neq p}\,\frac{1}{x_p-x_r}\,(1-\si_{pr})\ot\si_{pr}\ot\id\,+\,
\sum_{i=0}^\infty\,
\sum_{a,b=1}^m\, 
x_p^{\,-i-1}\ot E_{ab}^{\ts(p)}\ot E_{\ts ba}\,t^{\,i}
\end{gather*}
on \eqref{w}, so that  
the Cherednik operator \eqref{up} equals the composition $X_p\,Y_p\,$.
We will show that a representation of the algebra $\R_N$ on the 
vector space \eqref{w} can be defined by mapping
\begin{equation}
\label{xyrep}
x_p\mapsto X_p\ts,
\quad
y_p\mapsto Y_p
\quad\text{and}\quad
\si\mapsto\si\ot\si\ot\id
\end{equation}
for all $p=1\lcd N$ and $\si\in\Sym_N\ts$. We will use 
the defining relations \eqref{sirel},\eqref{cross3},\eqref{cross4}
of $\R_N\ts$.

The definitions of the operators $X_p$ and $Y_p$ immediately show
that the relations \eqref{sirel} are satisfied under the mapping
\eqref{xyrep}. Moreover, due to the latter fact it suffices to consider 
the relations \eqref{cross3},\eqref{cross4} only for $p=1\ts$.
In this case the commutator $[\,Y_p\,,X_q\,]$ for $q>1$ equals
$$
\sum_{r>1}\,\frac{1}{x_1-x_r}\,[\,1-\si_{1r}\,,x_q\,]\ot\si_{1r}\ot\id
\,=\,
\frac{1}{x_1-x_q}\,[\,-\,\si_{1q}\,,x_q\,]\ot\si_{1q}\ot\id
\,=\,
-\,\si_{1q}\ot\si_{1q}\ot\id
$$
as required by the relation \eqref{cross3}. Further, in the case $p=1$
the commutator $[\,Y_p\,,X_p\,]$ equals
$$
\ka+\sum_{r>1}\,\frac{1}{x_1-x_r}\,[\,1-\si_{1r}\,,x_1\,]\ot\si_{1r}\ot\id
\,=\,
\ka+\sum_{r>1}\,\si_{1r}\ot\si_{1r}\ot\id
$$
as required by the relation 
\eqref{cross4}. To complete the proof of the part (i) of 
Proposition \ref{1.5}, it now remains to check
the pairwise commutativity of the operators $Y_1\lcd Y_N\ts$. 

Extend the vector space \eqref{w} 
by replacing its first tensor factor
$\CC[\ts x_1\ts,x_1^{-1}\lcd x_N\ts,x_N^{-1}\ts]$ 
by the space of
all complex valued 
rational functions in 
$x_1\lcd x_N$ with the permutational action of $\Sym_N\ts$.
For any $p$ consider the following three
operators on the extended vector space,
\begin{gather*}
D_p\,=\,\d_p\ot\id^{\,\ot N}\ot\id\ts,
\quad
R_{\ts p}\,=\,
\sum_{r\neq p}\,\frac{1}{x_p-x_r}\,\si_{pr}\ot\si_{pr}\ot\id\ts,
\\
T_p\,=\,
\sum_{r\neq p}\,\frac{1}{x_p-x_r}\ot\si_{pr}\ot\id\,+\,
\sum_{i=0}^\infty\,
\sum_{a,b=1}^m\, 
x_p^{\,-i-1}\ot E_{ab}^{\ts(p)}\ot E_{\ts ba}\,t^{\,i}
\end{gather*}
so that the operator $Y_p$ is the restriction of the operator
$\ka\,D_p-R_{\ts p}+T_p$ to the space~\eqref{w}.
The operators $D_1\lcd D_N$ obviously pairwise commute.
The next two lemmas show that~the operators $R_{\ts 1}\lcd R_{\ts N}$ and
$T_1\lcd T_N$ enjoy the same property.

\begin{lemma}
\label{3.1}
The operators $R_{\ts 1}\lcd R_{\ts N}$ pairwise commute.
\end{lemma}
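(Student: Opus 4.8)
The plan is to verify directly that $[\,R_{\ts p}\,,R_{\ts q}\,]=0$ for all $p\neq q$, exploiting the fact that $R_{\ts p}$ is, up to the rational-function coefficients $1/(x_p-x_r)$, built from the ``diagonal'' transpositions $\si_{pr}\ot\si_{pr}\ot\id$. First I would observe that the third tensor factor plays no role whatsoever: every term carries $\id$ there, so it suffices to prove the identity for the operators $\widetilde R_{\ts p}=\sum_{r\neq p}(x_p-x_r)^{-1}\,\si_{pr}\ot\si_{pr}$ acting on the tensor product of the space of rational functions with $(\CC^{\ts m})^{\ot N}$. In fact the operator $\widetilde R_{\ts p}$ is exactly the image, under the diagonal embedding $\Sym_N\to\Sym_N\times\Sym_N$, of the operator $\sum_{r\neq p}(x_p-x_r)^{-1}\si_{pr}$ that appears in the Dunkl operator \eqref{dp}. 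So this lemma is morally a restatement of the pairwise commutativity of the Dunkl operators already recalled after Proposition \ref{1.3}; still, since the Dunkl operators also involve the $\ka\,\d_p$ terms, a self-contained computation is cleaner.

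The key computational step is to expand the commutator $[\,R_{\ts p}\,,R_{\ts q}\,]$ as a double sum over $r\neq p$ and $s\neq q$ of terms
\beqs
\Bigl[\,\frac{1}{x_p-x_r}\,\si_{pr}\ot\si_{pr}\,,\,\frac{1}{x_q-x_s}\,\si_{qs}\ot\si_{qs}\,\Bigr]
\eeqs
and to group these terms according to how the index sets $\{p,r\}$ and $\{q,s\}$ intersect. When $\{p,r\}\cap\{q,s\}=\emptyset$ the two factors commute outright, since disjoint transpositions commute in each $\Sym_N$ factor and $\si_{qs}$ fixes both $x_p$ and $x_r$ while $\si_{pr}$ fixes $x_q$ and $x_s$. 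The only surviving contributions come from pairs with a common index; writing $\si_{pr}\ot\si_{pr}$ acting on a rational coefficient $f(x_1\lcd x_N)$ as $\si_{pr}(f)\cdot(\si_{pr}\ot\si_{pr})$, one reduces everything to a finite collection of three-index terms involving distinct $p$, $q$, and a third variable, and the claim becomes the classical rational identity
\beqs
\frac{1}{(x_p-x_q)(x_p-x_s)}+\frac{1}{(x_q-x_s)(x_q-x_p)}+\frac{1}{(x_s-x_p)(x_s-x_q)}=0\ts,
\eeqs
together with its degenerations where two of the three indices coincide. Collecting the group-algebra factors attached to each rational function and checking that they cancel in pairs completes the verification.

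The main obstacle is purely bookkeeping: organizing the double sum so that the cancellations are transparent, in particular keeping straight which permutation acts on which rational coefficient and matching up the terms where $r$ ranges over a value equal to $q$ (and symmetrically $s=p$) with the generic three-index terms. There is no conceptual difficulty once one notes that all the action happens in the first two tensor factors and that the permutations commute past the rational functions only after an explicit substitution of variables. I would therefore present the proof as: (1) discard the trivial third factor; (2) split the double commutator by the intersection pattern of $\{p,r\}$ and $\{q,s\}$; (3) dispose of the disjoint case immediately; (4) in the remaining cases move permutations past coefficients and invoke the partial-fraction identity above to see the residual terms cancel.
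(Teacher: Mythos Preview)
Your proposal is correct and follows essentially the same route as the paper's own proof: both expand $[\,R_{\ts p}\,,R_{\ts q}\,]$ as a double sum, discard the terms with disjoint index sets, and group the surviving contributions into triples indexed by a third variable, each triple vanishing by exactly the partial-fraction identity you write down. The one shortcut the paper takes that you do not is to observe that faithfulness of the $\Sym_N$-action on rational functions makes the diagonal operators $f(x)\,\si\ot\si$ linearly independent, so one may drop the second tensor factor and verify the identity purely in the first factor---this eliminates most of the bookkeeping you flag as the main obstacle.
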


\begin{proof}
It suffices to prove the commutativity of 
the operators $R_{\ts p}$ and $R_{\ts q}$ only for $p=1$ and $q=2$. 
By definition, the commutator $[\,R_{\ts 1}\,,R_{\ts 2}\,]$ is
equal to the sum
$$
\sum_{\substack{r\neq 1\\s\neq 2}}\ 
\Bigl[\ 
\frac{1}{x_1-x_r}\,\si_{1r}\ot\si_{1r}\ot\id
\ ,\,
\frac{1}{x_2-x_s}\,\si_{2s}\ot\si_{2s}\ot\id
\ \Bigr]
\vspace{-4pt}
$$
which is in turn equal to the sum over the indices $r>2$ of  
\begin{align*}
\Bigl[\ 
\frac{1}{x_1-x_2}\,\si_{12}\ot\si_{12}\ot\id
&\ ,\,
\frac{1}{x_1-x_r}\,\si_{1r}\ot\si_{1r}\ot\id
\ \Bigr]\,+
\\
\Bigl[\ 
\frac{1}{x_1-x_2}\,\si_{12}\ot\si_{12}\ot\id
&\ ,\,
\frac{1}{x_2-x_r}\,\si_{2r}\ot\si_{2r}\ot\id
\ \Bigr]\,+
\\
\Bigl[\  
\frac{1}{x_1-x_r}\,\si_{1r}\ot\si_{1r}\ot\id
&\ ,\,
\frac{1}{x_2-x_r}\,\si_{2r}\ot\si_{2r}\ot\id
\ \Bigr]\,.
\end{align*}
For every single index $r>2\ts$, the sum of the three commutators displayed 
above is equal to zero. Indeed, because the action of
the group $\Sym_N$ on the space of complex valued rational 
functions in $x_1\lcd x_N$ by permutations of the variables is faithful,
it suffices to prove that 
$$
\Bigl[\  
\frac{1}{x_1-x_2}\,\si_{12}
\,,
\frac{1}{x_1-x_r}\,\si_{1r}
\,\Bigr]+
\Bigl[\  
\frac{1}{x_1-x_2}\,\si_{12}
\,,
\frac{1}{x_2-x_r}\,\si_{2r}
\,\Bigr]+
\Bigl[\  
\frac{1}{x_1-x_r}\,\si_{1r}
\,,
\frac{1}{x_2-x_r}\,\si_{2r}
\,\Bigr]=0\,.
$$
The last equality can be easily
verified by direct calculation.
\end{proof}

\vspace{-14pt}

\begin{lemma}
\label{3.2}
The operators $T_1\lcd T_N$ pairwise commute.
\end{lemma}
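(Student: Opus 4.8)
The plan is to reduce the claim at once to the single identity $[\,T_1\,,T_2\,]=0$, since the indices $1\lcd N$ enter symmetrically. First I would split each $T_p$ into its ``rational part'' and its ``current part'',
\beqs
A_p=\sum_{r\neq p}\,\frac1{x_p-x_r}\ot\si_{pr}\ot\id\,,
\qquad
B_p=\sum_{i\ge0}\,\sum_{a,b=1}^m\,x_p^{\,-i-1}\ot E_{ab}^{\ts(p)}\ot E_{\ts ba}\,t^{\,i}\,,
\eeqs
so that $T_p=A_p+B_p$. The key point making everything tractable is that \emph{every} operator occurring in the \emph{first} tensor factor of $T_1$ and of $T_2$ is multiplication by a rational function of $x_1,x_2\ts$: here the element $\si_{pr}$ in $A_p$ acts only on the $\CC^{\ts m}$ factors, not on the variables, and $B_p$ contributes only the multiplications by $x_p^{\,-i-1}$. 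All of these multiplication operators commute with one another. Hence $[\,T_1\,,T_2\,]$ expands as a sum of four commutators $[\,A_1\,,A_2\,]$, $[\,A_1\,,B_2\,]$, $[\,B_1\,,A_2\,]$, $[\,B_1\,,B_2\,]$, and in each of them the first-factor functions may simply be multiplied out.

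For $[\,A_1\,,A_2\,]$ I would argue exactly as in the proof of Lemma \ref{3.1}. Only transpositions sharing a common index can fail to commute, so the double sum collapses to the contribution of $\si_{12},\si_{1\rho},\si_{2\rho}$ for each $\rho\notin\{1,2\}\ts$; using $[\,\si_{12}\,,\si_{2\rho}\,]=[\,\si_{1\rho}\,,\si_{12}\,]=-\,[\,\si_{1\rho}\,,\si_{2\rho}\,]$ together with the partial-fraction identity
\beqs
\frac1{(x_1-x_2)(x_2-x_\rho)}-\frac1{(x_1-x_2)(x_1-x_\rho)}-\frac1{(x_1-x_\rho)(x_2-x_\rho)}=0
\eeqs
one sees that each such contribution vanishes, so that $[\,A_1\,,A_2\,]=0$ on its own.

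For the remaining three commutators I would compute directly from the relations \eqref{hatcom}. In $[\,A_1\,,B_2\,]$ only the term $r=2$ of $A_1$ can contribute, since $\si_{1r}$ commutes with all matrix units acting at site $2$ unless $r=2\ts$; with $[\,\si_{12}\,,E_{cd}^{\ts(2)}\,]=(E_{cd}^{\ts(1)}-E_{cd}^{\ts(2)})\,\si_{12}$ this gives a sum whose first-factor coefficient is $x_2^{\,-j-1}/(x_1-x_2)$, and symmetrically $[\,B_1\,,A_2\,]$ has coefficient $x_1^{\,-i-1}/(x_2-x_1)$. Adding the two and using the elementary expansion
\beqs
\frac{x_2^{\,-k-1}-x_1^{\,-k-1}}{x_1-x_2}=\sum_{\substack{i,j\ge0\\ i+j=k}}x_1^{\,-i-1}\,x_2^{\,-j-1}
\eeqs
rewrites $[\,A_1\,,B_2\,]+[\,B_1\,,A_2\,]$ as a genuine two-variable series in $x_1^{-1},x_2^{-1}$ whose coefficients involve $(E_{ab}^{\ts(1)}-E_{ab}^{\ts(2)})\,\si_{12}\ot E_{\ts ba}\,t^{\,i+j}$. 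Finally $[\,B_1\,,B_2\,]$ is evaluated straight from \eqref{hatcom}: the matrix units $E_{ab}^{\ts(1)}$ and $E_{cd}^{\ts(2)}$ commute, while $[\,E_{\ts ba}\,t^{\,i},E_{\ts dc}\,t^{\,j}\,]$ produces no central term because for $i,j\ge0$ one has $i=-j$ only when $i=j=0\ts$, where the structure constant $i$ already vanishes. Writing the permutation operator as $\si_{12}=\sum_{e,f}E_{ef}^{\ts(1)}E_{fe}^{\ts(2)}$ in the output of the previous step, and comparing the coefficients of $x_1^{\,-i-1}x_2^{\,-j-1}\ot(\,\cdot\,)\ot E_{\ts pq}\,t^{\,k}$ with $k=i+j$, one checks that $[\,B_1\,,B_2\,]$ equals the negative of $[\,A_1\,,B_2\,]+[\,B_1\,,A_2\,]$, and hence $[\,T_1\,,T_2\,]=0\ts$.

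The hard part is purely organizational: one must match the ``$\si_{12}$-decorated'', single-variable contributions coming from the two mixed commutators against the genuinely two-variable contribution of $[\,B_1\,,B_2\,]\ts$, and this comparison only becomes transparent after the geometric-series identity above has been used to put the mixed terms in two-variable form and after $\si_{12}$ has been spelled out in matrix units, whereupon both sides reduce to $\sum_{p,q}\big(\sum_a E_{ap}^{\ts(1)}E_{qa}^{\ts(2)}-\sum_b E_{qb}^{\ts(1)}E_{bp}^{\ts(2)}\big)\ot E_{\ts pq}\,t^{\,k}$ with matching coefficients. It is worth remarking that, in contrast with part (iii) of Proposition \ref{1.5}, this lemma holds for a $\glhat_m\ts$-module $V$ of \emph{arbitrary} level, precisely because the central element $C$ drops out of all the commutators above.
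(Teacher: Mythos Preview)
Your proof is correct and follows essentially the same organization as the paper: both reduce to $p=1,q=2$, both effectively split $T_p=A_p+B_p$ into its rational and current parts, and both verify the two vanishings $[A_1,A_2]=0$ and $[A_1,B_2]+[B_1,A_2]+[B_1,B_2]=0$ separately. The only real difference is in how the cancellations are justified. The paper recognizes each of them as an instance of the classical Yang\ts-Baxter equation for the rational $r$-matrix $\tfrac{1}{u-v}\sum_{a,b}E_{ab}\ot E_{ba}$\,: the first via the identity $\si_{pq}=\sum_{a,b}E_{ab}^{(p)}E_{ba}^{(q)}$ on $(\CC^{\ts m})^{\ot N}$, the second via the geometric expansion of $1/(u-v)$. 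You instead carry out the matrix-unit and partial-fraction bookkeeping by hand. Your route is more self-contained and makes the comparison of the mixed terms with $[B_1,B_2]$ fully explicit; the paper's route is shorter and points to the conceptual origin of the cancellation. One harmless slip: the first-factor functions are rational in all of $x_1\lcd x_N$, not just $x_1,x_2$; what matters, as you say, is only that they are multiplication operators and hence commute.
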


\begin{proof}
It suffices to prove the commutativity of the operators 
$T_p$ and $T_q$ only for $p=1$ and $q=2$. 
The commutator $[\,T_1\,,T_2\,]$ is
equal to
\begin{gather*}
\Bigl[\  
\frac{1}{x_1-x_2}\ot\si_{12}\ot\id
\ ,\,
\sum_{i=0}^\infty\,
\sum_{a,b=1}^m\, 
x_1^{\,-i-1}\ot E_{ab}^{\ts(1)}\ot E_{\ts ba}\,t^{\,i}
\,\Bigr]\,+
\\
\Bigl[\  
\frac{1}{x_1-x_2}\ot\si_{12}\ot\id
\ ,\,
\sum_{j=0}^\infty\,
\sum_{c,d=1}^m\, 
x_2^{\,-j-1}\ot E_{cd}^{\ts(2)}\ot E_{\ts dc}\,t^{\,j}
\,\Bigr]\,+
\\
\Bigl[\ 
\sum_{i=0}^\infty\,
\sum_{a,b=1}^m\, 
x_1^{\,-i-1}\ot E_{ab}^{\ts(1)}\ot E_{\ts ba}\,t^{\,i}
\ ,\,
\sum_{j=0}^\infty\,
\sum_{c,d=1}^m\, 
x_2^{\,-j-1}\ot E_{cd}^{\ts(2)}\ot E_{\ts dc}\,t^{\,j}
\,\Bigr]
\end{gather*}
plus the sum over the indices $r>2$ of  
\begin{align*}
&\Bigl[\ 
\frac{1}{x_1-x_2}\ot\si_{12}\ot\id
\ ,\,
\frac{1}{x_1-x_r}\ot\si_{1r}\ot\id
\ \Bigr]\,+
\\
&\Bigl[\ 
\frac{1}{x_1-x_2}\ot\si_{12}\ot\id
\ ,\,
\frac{1}{x_2-x_r}\ot\si_{2r}\ot\id
\ \Bigr]\,+
\\
&\Bigl[\  
\frac{1}{x_1-x_r}\ot\si_{1r}\ot\id
\ ,\,
\frac{1}{x_2-x_r}\ot\si_{2r}\ot\id
\ \Bigr]\,.
\end{align*}
Here we have omitted the zero commutators,
see the beginning of the proof of Lemma \ref{3.1}.

For any $r>2\ts$, the sum of the last three 
displayed commutators equals zero. The sum of the three
commutators in the previous display also equals zero.
Both equalities follow from the {\it classical Yang\ts-Baxter equation}
for the rational function of two complex variables~$u$~and~$v$
\begin{equation}
\label{r}
\frac1{u-v}\ 
\sum_{a,b=1}^m\,E_{ab}\ot E_{\ts ba}
\end{equation}
with values in $\gl_m\ot\gl_m\,$, see for instance \cite[Section 3.2]{AST}. 
To derive the first stated equality, 
we observe that the sum over $a,b=1\lcd m$ in \eqref{r}
acts on $\CC^m\ot\CC^m$ as the permutation of the two tensor factors.
To derive the second stated equality, we use the expansion
$$
\frac1{u-v}\,=\,
\sum_{i=0}^\infty\,u^{\,-i-1}\,v^{\,i}
$$
and also observe that for $i\ts,j\ge0$ the summand at the right hand side 
of \eqref{hatcom} involving the central element $C\in\glhat_m$ vanishes.
\end{proof}

Part (i) of Proposition \ref{1.5} follows from the two lemmas above, 
and from the next three.

\begin{lemma}
\label{3.3}
For any $p\neq q$ we have the equality\/ 
$[\,D_p\,,R_{\ts q}\,]+[\,R_{\ts p}\,,D_q\,]=0\ts$.
\end{lemma}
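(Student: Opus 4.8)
The plan is to verify the identity $[\,D_p\,,R_{\ts q}\,]+[\,R_{\ts p}\,,D_q\,]=0$ directly, reducing as usual to the case $p=1$, $q=2$. Since the operators $D_1\lcd D_N$ act only on the first tensor factor (the space of rational functions in $x_1\lcd x_N$) and commute with the permutational action of $\Sym_N$ there, while $R_{\ts p}$ is a sum over $r\neq p$ of terms $\frac1{x_p-x_r}\,\si_{pr}\ot\si_{pr}\ot\id$, the commutators $[\,D_p\,,R_{\ts q}\,]$ and $[\,R_{\ts p}\,,D_q\,]$ involve only the first two tensor factors; the last tensor factor plays no role. So it suffices to prove, in the algebra generated by $\d_1\lcd\d_N$ and the rational functions and permutations, that
$$
\Bigl[\,\d_1\ ,\ \sum_{r\neq 2}\frac{1}{x_2-x_r}\,\si_{2r}\ot\si_{2r}\,\Bigr]
+\Bigl[\,\sum_{r\neq 1}\frac{1}{x_1-x_r}\,\si_{1r}\ot\si_{1r}\ ,\ \d_2\,\Bigr]=0\,.
$$

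First I would note that in the first commutator only the term $r=1$ survives: for $r\neq 1$ the function $\frac1{x_2-x_r}$ does not involve $x_1$ and $\si_{2r}$ commutes with $\d_1$ (since $\si_{2r}$ fixes $x_1$), so that term is killed by $\d_1$. Symmetrically, in the second commutator only the term $r=2$ survives. Thus the left-hand side collapses to
$$
\Bigl[\,\d_1\ ,\ \frac{1}{x_2-x_1}\,\si_{12}\ot\si_{12}\,\Bigr]
+\Bigl[\,\frac{1}{x_1-x_2}\,\si_{12}\ot\si_{12}\ ,\ \d_2\,\Bigr]\,.
$$
Writing $\si=\si_{12}\ot\si_{12}$ and using that $\si$ is an involution conjugating $\d_1$ to $\d_2$, one computes $[\,\d_1\,,f\si\,]=(\d_1 f)\si+f(\d_1\si-\si\d_1)=(\d_1 f)\si+f\si(\d_2-\d_1)$ and similarly $[\,f\si\,,\d_2\,]=-(\d_2 f)\si-f\si(\d_1-\d_2)$, where I use that $\si$ permutes the variables so $\si\d_1=\d_2\si$, $\si\d_2=\d_1\si$. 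Adding the two, with $f=\frac1{x_2-x_1}$ in the first term and $-f$ in the second: the two copies of $f\si(\d_2-\d_1)$ cancel, and one is left with $(\d_1 f)\si-(\d_2(-f))\si=(\d_1 f+\d_2 f)\si$. But $f=\frac1{x_2-x_1}$ is annihilated by $\d_1+\d_2$, so this vanishes.

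The only mild subtlety — and the step I would be most careful about — is bookkeeping the noncommutativity: $\d_1$ does not commute with $\si_{12}$, and one must track the order of the operator of multiplication by $f$ relative to $\si$. Once the convention is fixed (say, multiplication-then-permutation as written in the definition of $R_{\ts p}$), the cancellation is exactly as above and the lemma follows; there is nothing that needs the specific form of the last tensor factor or the $\slhat_m$-module $V$.
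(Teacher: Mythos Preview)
Your proof is correct and follows essentially the same approach as the paper: reduce to $p=1$, $q=2$, observe that only the $r=1$ term of $R_{2}$ (resp.\ the $r=2$ term of $R_{1}$) contributes to the commutator with $D_1$ (resp.\ $D_2$), and then compute directly---the paper phrases the final step as a symmetry under $1\leftrightarrow 2$, while you add the two commutators and cancel, which amounts to the same thing. One small slip in your prose: the $D_p$ do \emph{not} commute with the full permutational action of $\Sym_N$ (only with permutations fixing $p$), but you never actually use that claim, and the specific commutation $\si_{2r}\,\d_1=\d_1\,\si_{2r}$ for $r\neq 1$ that you do use is correct.
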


\begin{proof}
It suffices to prove the stated equality
only for $p=1$ and $q=2$. By definition,
\begin{gather*}
[\,D_1\,,R_{\ts 2}\,]\,=
\sum_{\substack{r\neq 2}}\ 
\Bigl[\ 
\d_1\ot\id^{\,\ot N}\ot\id
\ ,\,
\frac{1}{x_2-x_r}\,\si_{2r}\ot\si_{2r}\ot\id
\ \Bigr]\,=
\vspace{-4pt}
\\
\frac{1}{(\ts x_1-x_2\ts)^2\ts}\,\si_{12}\ot\si_{12}\ot\id
+
\frac{1}{x_1-x_2}\,\si_{12}\,(\ts\d_1-\d_2\ts)\ot\si_{12}\ot\id\,.
\end{gather*}
The sum in the last displayed line is invariant under exchanging
the indices $1$~and~$2\ts$. Hence the commutator $[\ts D_2\,,R_{\ts 1}\ts]$
is equal to the same sum. Therefore
$[\,D_1\,,R_{\ts 2}\,]+[\,R_{\ts 1}\,,D_2\,]=0\ts$.
\end{proof}

\vspace{-14pt}

\begin{lemma}
\label{3.4}
For any $p\neq q$ we have the equality\/ 
$[\,D_p\,,T_q\,]+[\,T_p\,,D_q\,]=0\ts$.
\end{lemma}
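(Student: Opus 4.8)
The plan is to follow exactly the template of Lemma \ref{3.3}. By symmetry in the indices it suffices to treat the case $p=1$, $q=2$. I would write out $[\,D_1\,,T_2\,]$ by expanding $T_2$ into its two pieces: the sum $\sum_{r\neq 2}\,\frac1{x_2-x_r}\ot\si_{2r}\ot\id$ and the sum $\sum_{i\ge0}\sum_{a,b}x_2^{\,-i-1}\ot E_{ab}^{\ts(2)}\ot E_{ba}\,t^{\,i}$. Since $D_1=\d_1\ot\id^{\,\ot N}\ot\id$ differentiates only the $x$-variables and acts trivially on the other two tensor factors, its commutator with the second piece of $T_2$ is zero, because $\d_1$ commutes with multiplication by $x_2^{\,-i-1}$ (the variable $x_2$ is not $x_1$); so only the rational-function piece of $T_2$ contributes.

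Within that piece, $[\,\d_1\,,\frac1{x_2-x_r}\,]=0$ unless $r=1$, so the whole commutator collapses to the single term coming from $r=1$, namely
\begin{equation*}
[\,D_1\,,T_2\,]=\Bigl[\,\d_1\ot\id^{\,\ot N}\ot\id\,,\,\tfrac1{x_2-x_1}\ot\si_{12}\ot\id\,\Bigr]
=\frac1{(x_1-x_2)^2}\ot\si_{12}\ot\id+\frac1{x_2-x_1}\,\si_{12}\,\d_1\ot\si_{12}\ot\id-\frac1{x_2-x_1}\,\d_1\,(\text{shift})\,.
\end{equation*}
More carefully: acting on a test function, $\d_1$ hits both the explicit factor $\frac1{x_2-x_1}$ (producing the $\frac1{(x_1-x_2)^2}$ term) and whatever follows the operator $\si_{12}$; after moving $\si_{12}$ past $\d_1$ one gets $\si_{12}\,\d_2$ on the right, so the net surviving terms are $\frac1{(x_1-x_2)^2}\ot\si_{12}\ot\id+\frac1{x_2-x_1}\,\si_{12}(\d_1-\d_2)\ot\si_{12}\ot\id$, exactly mirroring the computation in Lemma \ref{3.3} (with the overall sign inherited from $\frac1{x_2-x_1}=-\frac1{x_1-x_2}$, but the first term is a square so it is symmetric).

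The key observation is then that this resulting expression is symmetric under interchange of the indices $1$ and $2$: the factor $\frac1{(x_1-x_2)^2}$ is manifestly symmetric, $\si_{12}$ is symmetric, and $\frac1{x_2-x_1}\,\si_{12}(\d_1-\d_2)=\frac1{x_1-x_2}\,\si_{12}(\d_2-\d_1)$ is likewise unchanged when $1$ and $2$ are swapped. Therefore $[\,T_1\,,D_2\,]=-[\,D_2\,,T_1\,]$ equals the negative of the same symmetric expression, and the two commutators cancel: $[\,D_1\,,T_2\,]+[\,T_1\,,D_2\,]=0$. There is no real obstacle here; the only point requiring a little care is bookkeeping of the sign from $\frac1{x_2-x_r}$ versus $\frac1{x_p-x_r}$ and confirming that the $E_{ab}^{\ts(2)}\ot E_{ba}t^{\,i}$ part genuinely drops out because $\d_1$ does not see the variable $x_2$. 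The argument is the verbatim analogue of Lemma \ref{3.3}, so I would present it in two or three lines.
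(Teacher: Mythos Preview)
Your overall approach matches the paper's: reduce to $p=1$, $q=2$, observe that only the $r=1$ term of the rational piece of $T_2$ contributes, and then note the result is symmetric under $1\leftrightarrow 2$. However, your computation of the surviving commutator is incorrect because you have confused $T_p$ with $R_p$.

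Look again at the definition:
\[
T_2\,=\,\sum_{r\neq 2}\,\frac{1}{x_2-x_r}\ot\si_{2r}\ot\id\,+\,\cdots
\]
The permutation $\si_{2r}$ sits \emph{only} in the middle tensor factor $(\CC^{\ts m})^{\ot N}$; the first tensor factor is plain multiplication by the scalar function $\tfrac{1}{x_2-x_r}$. There is no $\si_{12}$ in the first factor for $\d_1$ to pass through, so there is no ``$\si_{12}\,\d_1\to\si_{12}\,\d_2$'' conjugation and no $\tfrac{1}{x_2-x_1}\,\si_{12}(\d_1-\d_2)$ term. The commutator in the first factor is simply $[\,\d_1\,,\tfrac{1}{x_2-x_1}\,]=\tfrac{1}{(x_1-x_2)^2}$, a pure multiplication operator. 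Hence the paper gets in one step
\[
[\,D_1\,,T_2\,]\;=\;\frac{1}{(x_1-x_2)^2}\ot\si_{12}\ot\id\,,
\]
which is manifestly symmetric in $1,2$, and the lemma follows. Your spurious extra term happens also to be symmetric, so your conclusion survives, but the displayed intermediate value of $[\,D_1\,,T_2\,]$ is wrong. The cross term $\tfrac{1}{x_p-x_r}\,\si_{pr}(\d_p-\d_r)$ you wrote down belongs to Lemma~\ref{3.3} (for $R_p$), not here.
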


\begin{proof}
It suffices to prove the stated equality
only for $p=1$ and $q=2$. By omitting the~zero commutators, we get
$$
[\,D_1\,,T_2\,]=
\Bigl[\,\d_1\ts,\ts\frac{1}{x_2-x_1}
\,\Bigr]\ot\si_{12}\ot\id
=
\frac{1}{(\ts x_1-x_2\ts)^2\ts}\ot\si_{12}\ot\id
$$
which is again invariant under exchanging
the indices $1$~and~$2\ts$. Therefore the commutator
$[\,D_2\,,T_1\,]$ is the same as $[\,D_1\,,T_2\,]\ts$.
Thus we get the equality
$[\,D_1\,,T_2\,]+[\,T_1\,,D_2\,]=0\ts$.
\end{proof}

\vspace{-14pt}

\begin{lemma}
\label{3.5}
For any $p\neq q$ we have the equality\/ 
$[\,R_{\ts p}\,,T_q\,]+[\,T_p\,,R_{\ts q}\,]=0\ts$.
\end{lemma}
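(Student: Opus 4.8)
The plan is to follow the same pattern as in Lemmas \ref{3.1}--\ref{3.4}. Conjugating $R_p$ and $T_p$ by elements of $\Sym_N$ cyclically permutes the subscript $p$, so it suffices to establish $[\,R_1\,,T_2\,]+[\,T_1\,,R_2\,]=0$. I would then split $T_r=T_r'+T_r''$, where $T_r'=\sum_{s\neq r}\frac1{x_r-x_s}\ot\si_{rs}\ot\id$ is the ``permutation part'' of $T_r$ and $T_r''=\sum_{i\ge0}\sum_{a,b=1}^m x_r^{\,-i-1}\ot E_{ab}^{\ts(r)}\ot E_{\ts ba}\,t^{\,i}$ is the part containing the generators of $\glhat_m$. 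Accordingly
\[
[\,R_1\,,T_2\,]+[\,T_1\,,R_2\,]=\bigl([\,R_1\,,T_2''\,]+[\,T_1''\,,R_2\,]\bigr)+\bigl([\,R_1\,,T_2'\,]+[\,T_1'\,,R_2\,]\bigr),
\]
and the strategy is to prove that each of the two bracketed sums vanishes separately.

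For the first bracketed sum the argument is short. The summand of $R_1$ indexed by $r$ commutes with $T_2''$ unless $r=2$: on $(\CC^{\ts m})^{\ot N}$ the operator $T_2''$ touches only the second tensor slot, on the first tensor factor it involves only the variable $x_2$, and on $V$ the operator $R_1$ acts as the identity. Hence $[\,R_1\,,T_2''\,]=[\,A_{12}\,,T_2''\,]$, where $A_{12}=\frac1{x_1-x_2}\,\si_{12}\ot\si_{12}\ot\id$ is the $r=2$ summand of $R_1$; likewise $[\,T_1''\,,R_2\,]=[\,T_1''\,,A_{21}\,]=[\,A_{12}\,,T_1''\,]$ since $A_{21}=-A_{12}$. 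Thus the first bracketed sum equals $[\,A_{12}\,,\,T_1''+T_2''\,]$. Now $A_{12}=M\ts S$, where $M$ denotes multiplication by $\frac1{x_1-x_2}$ in the first tensor factor and $S=\si_{12}\ot\si_{12}\ot\id$. The operator $T_1''+T_2''$ commutes with $M$, because in the first tensor factor it acts by multiplication by powers of $x_1$ and $x_2$; and it is invariant under conjugation by $S$, which simply interchanges $T_1''$ and $T_2''$. Therefore $[\,A_{12}\,,\,T_1''+T_2''\,]=0$.

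For the second bracketed sum, all operators involved act as the identity on $V$, so one may work on the tensor product of the space of rational functions in $x_1\lcd x_N$ with $(\CC^{\ts m})^{\ot N}$. Here it is convenient to note that $S=\si_{12}\ot\si_{12}\ot\id$ conjugates $R_1$ into $R_2$ and $T_2'$ into $T_1'$, so $[\,T_1'\,,R_2\,]=-\ts S\,[\,R_1\,,T_2'\,]\,S^{-1}$ and the sum equals $[\,R_1\,,T_2'\,]-S\,[\,R_1\,,T_2'\,]\,S^{-1}$; thus it suffices to show that $[\,R_1\,,T_2'\,]$ commutes with $S$. Expanding $[\,R_1\,,T_2'\,]$ as a double sum over pairs of transpositions, a summand vanishes unless the two transpositions share an index, so every nonzero summand involves at most the indices $1$, $2$ and one further index $r$. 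Grouping the nonzero summands by this third index $r$, together with the group of summands involving only $1$ and $2$, and using that distinct permutations act as linearly independent operators on the field of rational functions, each group reduces to a partial-fraction identity in the variables $x_1,x_2,x_r$ coupled with the matching permutations of $(\CC^{\ts m})^{\ot N}$ --- an identity of exactly the same type as those verified in the proofs of Lemmas \ref{3.1} and \ref{3.2}. One finds that the summands carrying a transposition $\si_{pq}$ with $\{p,q\}\neq\{1,2\}$ cancel among themselves, while the remaining $\si_{12}$-terms are invariant under $S$; hence $[\,R_1\,,T_2'\,]$ commutes with $S$, as required.

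I expect the bookkeeping in the second bracketed sum to be the main obstacle: one must keep careful track of which pairs of transpositions interact and match each rational-function coefficient with the correct permutation of $(\CC^{\ts m})^{\ot N}$ before the partial-fraction cancellations become visible. The first bracketed sum, by contrast, is disposed of cleanly by the symmetry argument above. Together with Lemmas \ref{3.1}--\ref{3.4}, Lemma \ref{3.5} yields the pairwise commutativity of $Y_1\lcd Y_N$ and thereby completes the proof of part (i) of Proposition \ref{1.5}.
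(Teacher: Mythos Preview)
Your proposal is correct and follows essentially the same route as the paper: reduce to $p=1$, $q=2$ and show that $[\,R_1\,,T_2\,]$ is invariant under swapping the indices $1$ and $2$ (equivalently, commutes with $S=\si_{12}\ot\si_{12}\ot\id$). The paper does not formally split $T_r=T_r'+T_r''$, but its display (3.51) is exactly $[\,R_1\,,T_2''\,]$ and the three-line display (3.52) is the $T'$-contribution, so the organisation is the same.

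Two small remarks. First, your treatment of the $T''$-piece via $[\,A_{12}\,,T_1''+T_2''\,]=0$ is a bit slicker than the paper's direct expansion of (3.51); both arguments are short. Second, in your sketch of the $T'$-piece the phrase ``summands carrying a transposition $\si_{pq}$'' is imprecise: after expanding, the middle tensor factor carries products $\si_{1r}\si_{12}$, $\si_{1r}\si_{2r}$, etc., which are $3$-cycles on $\{1,2,r\}$, not single transpositions. What the paper actually shows (and what your partial-fraction computation would yield) is that for each $r>2$ the first two commutators in (3.52) cancel each other, while the third is $S$-invariant; the remaining term from $r=2$, $s=1$ is also $S$-invariant. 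This is exactly the pattern you describe, just with the bookkeeping made precise.
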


\begin{proof}
It suffices to prove the stated equality only
for $p=1$ and $q=2$. 
By omitting the zero commutators like we did in 
our proof of Lemma \ref{3.2}, the commutator
$[\,R_{\ts 1}\,,T_2\,]$ is~equal~to
\begin{equation}
\label{3.51}
\Bigl[\ 
\frac{1}{x_1-x_2}\,\si_{12}\ot\si_{12}\ot\id
\ ,\,
\sum_{i=0}^\infty\,
\sum_{a,b=1}^m\, 
x_2^{\,-i-1}\ot E_{ab}^{\ts(2)}\ot E_{\ts ba}\,t^{\,i}
\,\Bigr]
\end{equation}
plus the sum over the indices $r>2$ of
\begin{align}
\notag
-\,\Bigl[\    
\frac{1}{x_1-x_r}\,\si_{1r}\ot\si_{1r}\ot\id
&\ ,\,
\frac{1}{x_1-x_2}\ot\si_{12}\ot\id
\,\Bigr]\,+
\\
\notag
\Bigl[\ 
\frac{1}{x_1-x_r}\,\si_{1r}\ot\si_{1r}\ot\id
&\ ,\,
\frac{1}{x_2-x_r}\ot\si_{2r}\ot\id
\,\Bigr]\,+
\\
\label{3.52}
\Bigl[\  
\frac{1}{x_1-x_2}\,\si_{12}\ot\si_{12}\ot\id
&\ ,\,
\frac{1}{x_2-x_r}\ot\si_{2r}\ot\id
\,\Bigr]\,.
\end{align}
In particular, here we used the vanishing of the commutator
$$
\ \Bigl[\ 
\frac{1}{x_1-x_2}\,\si_{12}\ot\si_{12}\ot\id
\ ,\,
\frac{1}{x_1-x_2}\ot\si_{12}\ot\id
\,\Bigr]\,.
$$

The commutator displayed in the single line \eqref{3.51} can be rewritten 
as the sum
$$
\sum_{i=0}^\infty\,
\sum_{a,b=1}^m\, 
\Bigl(\ 
\frac{x_1^{\,-i-1}}{x_1-x_2}\,\si_{12}\ot
E_{ab}^{\ts(1)}\si_{12}\ot E_{\ts ba}\,t^{\,i}
\,-\,
\frac{x_2^{\,-i-1}}{x_1-x_2}\,\si_{12}\ot
E_{ab}^{\ts(2)}\si_{12}\ot E_{\ts ba}\,t^{\,i}
\,\Bigr)
$$
which is obviously invariant under exchanging the indices $1$ and $2\ts$.
For any index $r>2\ts$, the first
commutator displayed in the three lines \eqref{3.52} 
can be rewritten as
$$
-\ \frac{1}{(x_1-x_r)(x_2-x_r)}\,\si_{1r}\ot\si_{1r}\,\si_{12}\ot\id
\,-\,
\frac{1}{(x_1-x_2)(x_1-x_r)}\,\si_{1r}\ot\si_{12}\,\si_{1r}\ot\id\,,
$$
while the second commutator in \eqref{3.52} can be rewritten as
$$
-\ \frac{1}{(x_1-x_2)(x_1-x_r)}\,\si_{1r}\ot\si_{1r}\,\si_{2r}\ot\id
\,-\,
\frac{1}{(x_1-x_r)(x_2-x_r)}\,\si_{1r}\ot\si_{2r}\,\si_{1r}\ot\id\,.
$$
Hence these two commutators cancel each other in \eqref{3.52}
by the relations $\si_{1r}\,\si_{12}=\si_{2r}\,\si_{1r}$ and
$\si_{12}\,\si_{1r}=\si_{1r}\,\si_{2r}\,$.
The third commutator in \eqref{3.52} can be rewritten as the difference
$$
\frac{1}{(x_1-x_2)(x_1-x_r)}\,\si_{12}\ot\si_{12}\,\si_{2r}\ot\id
\,-\,
\frac{1}{(x_1-x_2)(x_2-x_r)}\,\si_{12}\ot\si_{2r}\,\si_{12}\ot\id
$$
which is invariant under exchanging the indices $1$ and $2$
due to the relations $\si_{12}\,\si_{1r}=\si_{2r}\,\si_{12}$ and
$\si_{1r}\,\si_{12}=\si_{12}\,\si_{2r}\ts$.
Therefore the commutator
$[\,R_{\ts 2}\,,T_1\,]$ is the same as the $[\,R_{\ts 1}\,,T_2\,]$
which we computed above.
Thus we get the required equality
$[\,R_{\ts 1}\,,T_2\,]+[\,T_1\,,R_{\ts 2}\,]=0\ts$. 
\end{proof}

Thus we have proved part (i) of Proposition \ref{1.5}. Moreover,
the above five lemmas imply that for any given $\ep\in\CC$ the operators
$\ka\,D_p+\ep\,R_{\ts p}+T_p$ with $p=1\lcd N$ pairwise commute.
However, the choice $\ep=-1$ is necessary for these operators
to preserve the vector space $W$.
The defining relations 
\eqref{cross3} and \eqref{cross4} of the algebra $\R_N$
exhibit this particular choice of $\ep\,$.
  
By inspecting the arguments given in this subsection,
we also get part (i) of Corollary~\ref{1.6}.
Indeed, if we replace the operator \eqref{up} by \eqref{ups}
then the definitions of $Y_p$ and $T_p$ given in the beginning 
in this subsection have to be modified by substracting 
\begin{equation}
\label{subtr}
\frac1m\,\ts\sum_{i=0}^\infty\,
x_p^{\,-i-1}\ot\id^{\,\ot\ts N}\ot I\,t^{\,i}\,.
\end{equation}
With these modifications only, all other equalities stated in 
this subsection will remain valid. 
Note that in the modified arguments we still regard $V$
as a module over the Lie algebra~$\glhat_m\,$.


\subsection{}
\label{sec:32}

In this subsection we will prove the 
parts (ii) and (iii) of Proposition \ref{1.5}.
For all indices $c,d=1\lcd m$ and each $j\in\ZZ$ the operator
\eqref{ecdj} on the vector space \eqref{w} commutes with any
simultaneous permutation of the variables $x_1\lcd x_N$ and of the $N$ 
tensor factors\/ $\CC^{\ts m}$.
The action of the element $C\in\glhat_m$ on $W$
also commutes with any such permutation. So the actions of the Lie algebra 
$\glhat_m$ and of the symmetric group $\Sym_N$ on $W$ mutually commute.

Any operator \eqref{ecdj} commutes with the multiplications
by $x_1\lcd x_N$ in the tensor factor
$\CC[\ts x_1\ts,x_1^{-1}\lcd x_N\ts,x_N^{-1}\ts]$ of $W$.
Furthermore, in the case $j=0$ the operator \eqref{ecdj}
commutes separately with any permutation of $x_1\lcd x_N$ and
with any permutation of the $N$ tensor factors\/ $\CC^{\ts m}$. 
Hence for $j=0$ it commutes with 
the Cherednik operator \eqref{up} on $W$.
Here we have also used the basic fact that the adjoint action
of $E_{cd}\in\gl_m$ annihilates the element 
$$
\sum_{a,b=1}^m\,E_{ab}\ot E_{ba}\in\gl_m\ot\gl_m\,.
$$

Thus we have proved the part (ii) of Proposition \ref{1.5}.
Moreover, we have proved that for all $c,d=1\lcd m$ and 
every $j\in\ZZ$ the operator \eqref{ecdj} on $W$ commutes with the
action of the subalgebra of $\T_N$ generated by $\Sym_N$ and
$\CC[\ts x_1\ts,x_1^{-1}\lcd x_N\ts,x_N^{-1}\ts]\,$. However, the operator 
\eqref{ecdj} generally does not commute with \eqref{up}. 
In the notation of Subsection \ref{sec:31},
the latter operator for any $p=1\lcd N$
can be written as the composition $X_p\,Y_p\,$.
Since the operator \eqref{ecdj} commutes with $X_p\,$,
we shall consider the commutator with $Y_p\,$.
Moreover, it suffices to consider the case $p=1$ only,
see the very beginning of the present subsection.

Extend the vector space $W$ 
by replacing its first tensor factor
$\CC[\ts x_1\ts,x_1^{-1}\lcd x_N\ts,x_N^{-1}\ts]$ 
by the space of all complex valued 
rational functions in 
$x_1\lcd x_N$ as we did in Subsection~\ref{sec:31}.
Then $Y_1$ can be presented as the restriction 
of the operator $\ka\,D_1-R_{\ts 1}+T_1$ to the space $W$.
The commutator of the summand $\ka\,D_1$ here with
the operator \eqref{ecdj} equals
\begin{equation}
\label{dcom}
\ka\,j\,x_1^{\,j-1}\ot E_{cd}^{\ts(1)}\ot\id\,.
\end{equation}
Further, the operator $R_{\ts 1}$ commutes with \eqref{ecdj}.
The commutator of $T_1$ with \eqref{ecdj} equals
\begin{gather}
\notag
\sum_{r=2}^N\,
\Bigl[\ \frac{1}{x_1-x_r}\ot\si_{1r}\ot\id
\ ,\,
x_1^{\,j}\ot E_{cd}^{\ts(1)}\ot\id+x_r^{\,j}\ot E_{cd}^{\ts(r)}\ot\id
\,\Bigr]\,+
\\
\label{tcom}
\sum_{i=0}^\infty\,
\sum_{a,b=1}^m\,
\bigl[\,\ts 
x_1^{\,-i-1}\ot E_{ab}^{\ts(1)}\ot E_{\ts ba}\,t^{\,i}
\ ,\,
x_1^{\,j}\ot E_{cd}^{\ts(1)}\ot\id+
\id\ot\id^{\,\ot N}\ot E_{\ts cd}\,t^{\,j}
\,\ts\bigr]
\end{gather}
where we have just omitted the zero commutators. Since the 
operator $\si_{1r}$ on $(\CC^{\ts m})^{\ot N}$ can~be written as the sum
$$
\sum_{a,b=1}^m\,E_{ab}^{\ts(1)}\,E_{ba}^{\ts(r)}\,,
$$
the sum in the first of the two displayed lines \eqref{tcom} equals
\begin{equation}
\label{3.7}
\sum_{r=2}^N\,
\sum_{a=1}^m\,
\frac{x_r^{\,j}-x_1^{\,j}}{x_1-x_r}\ot
\bigl(\,
E_{ca}^{\ts(1)}\,E_{ad}^{\ts(r)}-
E_{ad}^{\ts(1)}\,E_{ca}^{\ts(r)}
\,\bigr)
\ot\id\,.
\end{equation}
By using the commutation relations \eqref{hatcom} in $\glhat_m$
and the assumption that the $\glhat_m\ts$-module~$V$ has level $\ka-m\,$,
the sum in the second of the two displayed lines \eqref{tcom} equals
\begin{align*}
\sum_{i=0}^\infty\,
\sum_{a=1}^m\,
\bigl(\,
x_1^{\,j-i-1}\ot 
E_{ad}^{\ts(1)}\ot E_{ca}\,t^{\,i}
&-\,
x_1^{\,j-i-1}\ot
E_{ca}^{\ts(1)}\ot E_{ad}\,t^{\,i}
\\
-\,\,
x_1^{\,-i-1}\ot
E_{ad}^{\ts(1)}\ot E_{ca}\,t^{\,i+j}
&+\,
x_1^{\,-i-1}\ot
E_{ca}^{\ts(1)}\ot E_{ad}\,t^{\,i+j}
\,\bigr)
\end{align*}
$$
+\,
\begin{cases}
\,(m-\ka)\,j\,x_1^{\,j-1}\ot E_{cd}^{\ts(1)}\ot\id&\mbox{if\quad}j<0\,, 
\\ 
\hspace{72pt}
0&\mbox{if\quad}j\ge0\,. 
\end{cases}
$$

Now assume that $j<0\ts$, so that the element
$E_{cd}\,t^{\,j}$ belongs to the subalgebra $\q\subset\glhat_m\,$.
Then by adding the last displayed sum to
the operator \eqref{dcom} and making cancellations we~get
\begin{equation}
\label{3.8}
m\,j\,x_1^{\,j-1}\ot E_{cd}^{\ts(1)}\ot\id\,+
\sum_{i=0}^{-j-1}\,
\sum_{a=1}^m\,
x_1^{\,-i-1}\ot
\bigl(\,
E_{ca}^{\ts(1)}\ot E_{ad}\,t^{\,i+j}-
E_{ad}^{\ts(1)}\ot E_{ca}\,t^{\,i+j}
\,\bigr)\,.
\end{equation}
So for $j<0$ the commutator of the operator
$Y_1$ with \eqref{ecdj} equals the sum of \eqref{3.7}~and~\eqref{3.8}. 
Both the operators \eqref{3.7} and \eqref{3.8}
preserve the space $W$, so its extension is no longer needed.

To finish the proof of part (iii) of Proposition \ref{1.5}, let us 
identify the vector space $W$ with that of the tensor product 
\begin{equation}
\label{mod}
\CC^{\ts m}[\ts t\ts,t^{-1}\ts]^{\ts\ot N}\ot V
\end{equation}
of $\glhat_m\ts$-modules as we 
did in Subsection \ref{sec:13}. 
Denote by $\th$ the representation of the Lie algebra $\glhat_m$ on
the space of \eqref{mod}, so that
$\th\ts(\ts E_{cd}\,t^{\,j}\ts)$ is the operator \eqref{ecdj} 
under the identification~of the latter space with $W$.
For each $r=1\lcd N$ denote by $\th_r$ the representation
of $\glhat_m$ on the $r\,$th tensor factor 
$\CC^{\ts m}[\ts t\ts,t^{-1}\ts]$ of \eqref{mod},
so that under the identification of \eqref{mod} with~$W$
$$
\th_r\ts(\ts E_{cd}\,t^{\,j}\ts)=x_r^{\,j}\ot E_{cd}^{\ts(r)}\ot\id\,.
$$
Further denote by $\th_{N+1}$ the representation
of $\glhat_m$ on the tensor factor $V$ of \eqref{mod}.
Then 
\begin{equation}
\label{theta}
\th=\th_1+\ldots+\th_N+\th_{N+1}\,.
\end{equation}

Introduce the following element of the tensor square of the subalgebra 
$t^{-1}\ts\gl_m[\,t^{-1}\ts]\subset\glhat_m\,$,
$$
J\,=
\sum_{i=0}^{-j-1}\,
\sum_{a=1}^m\,
\bigl(\,
E_{ad}\,t^{\,i+j}\ot E_{ca}\,t^{\,-i-1}-
E_{ca}\,t^{\,i+j}\ot E_{ad}\,t^{\,-i-1}
\,\bigr)\,.
$$
Note that this element is antisymmetric\ts: it belongs to the exterior
square of $t^{-1}\,\gl_m[\,t^{-1}\ts]\,$. 
By rewriting the definition of this element as
\begin{gather*}
J\,=
\sum_{i=0}^{-j-1}\,
\Bigl(\, 
E_{cd}\,t^{\,i+j}\ot
\bigl(\,E_{cc}\,t^{\,-i-1}-E_{dd}\,t^{\,-i-1}\,\bigr)\,+
\\[2pt]
\sum_{a\ne c}\,
E_{ad}\,t^{\,i+j}\ot E_{ca}\,t^{\,-i-1}
-
\sum_{a\ne d}\,
E_{ca}\,t^{\,i+j}\ot E_{ad}\,t^{\,-i-1}
\,\Bigr)
\end{gather*}
we observe that $J$ belongs to the tensor square
of the subalgebra $\q\subset\slhat_m$ and 
thus to $\q\wedge\q\,$.

For every $r=1\lcd N,N+1$ consider
the linear map $\om_{\ts r}:\q\ot\q\to\End W$ 
defined by setting
$\ts\om_{\ts r}\ts(\ts P\ot Q\ts)=\th_r\ts(P)\,\th_1\ts(Q)\ts$
for $P\ts,Q\in\q\,$. Also consider the map
$$
\om=\om_1+\ldots+\om_N+\om_{N+1}\,.
$$
Then
$\ts\om\ts(\ts P\ot Q\ts)=\th\ts(P)\,\th_1(Q)$
due to \eqref{theta}.
Let us apply $\om_1\ts\lcd\ts\om_N\ts,\ts\om_{N+1}$ to $J\,$.
We get
$$
\om_{\ts1}\ts(J)=
j\,x_1^{\,j-1}\ot
(\,
m\,E_{cd}^{\ts(r)}-\de_{cd}\,\ts\id^{\,\ot N}
\,)\ot\id\,.
$$
For the indices $r=2\lcd N$ we get
$$
\om_{\ts r}\ts(J)\,=
\sum_{i=0}^{-j-1}\,
\sum_{a=1}^m\,
x_1^{\,-i-1}\,x_r^{\,i+j}\,
\bigl(\,
E_{ca}^{\ts(1)}\,E_{ad}^{\ts(r)}-
E_{ad}^{\ts(1)}\,E_{ca}^{\ts(r)}
\,\bigr)
$$
which coincides with the sum \eqref{3.7} due to the identity 
$$
\frac{v^{\,j}-u^{\,j}}{u-v}\,=\,
\sum_{i=0}^{-j-1}
u^{\,-i-1}\,v^{\,i+j}
\quad\text{for}\quad j<0\,.
$$
Further,
$$
\om_{\ts N+1}\ts(J)\,=
\sum_{i=0}^{-j-1}\,
\sum_{a=1}^m\,
x_1^{\,-i-1}\ot
\bigl(\,
E_{ca}^{\ts(1)}\ot E_{ad}\,t^{\,i+j}-
E_{ad}^{\ts(1)}\ot E_{ca}\,t^{\,i+j}
\,\bigr)
$$
which coincides with the sum \eqref{3.8} less its first summand. 
Hence for $j<0$ the commutator of the operator
$Y_1$ with \eqref{ecdj} equals 
$$
\de_{cd}\,\ts j\,x_1^{\,j-1}\ot\id^{\,\ot N}\ot\id+
\om_1(J)+\ldots+\om_N(J)+\om_{N+1}(J)
\,=\,
\de_{cd}\,\ts j\,x_1^{\,j-1}\ot\id^{\,\ot N}\ot\id+\om(J)\,.
$$
By linearity of the map \eqref{theta}
it follows that for any given element $P\in\q\,$, the commutator 
$[\,Y_1\,,\th\ts(P)\,]$ belongs to the right ideal of the
algebra $\End W$ generated by the image $\th\ts(Q)$ of a certain element 
$Q\in\q$ depending on $P\,$. 
Hence the operator $Y_1$ preserves the subspace
$\q\,W\subset W$. Thus we complete the
proof of the part (iii) of Proposition~\ref{1.5}.

By inspecting the arguments given in this subsection,
we also get the parts (ii) and (iii) of Corollary~\ref{1.6}. Indeed, 
if we modify the definitions of $Y_p$ and $T_p$ given in the beginning 
of Subsection \ref{sec:32} by subtracting \eqref{subtr}, then
for $p=1$ we will have to subtract from \eqref{tcom}
$$
\frac1m\,\ts\sum_{i=0}^\infty\,
\bigl[\,\ts 
x_1^{\,-i-1}\ot\id^{\,\ot\ts N}\ot I\,t^{\,i}
\ ,\,
\id\ot\id^{\,\ot N}\ot E_{\ts cd}\,t^{\,j}
\,\ts\bigr]\,.
\vspace{-8pt}
$$
By using the commutation relations \eqref{hatcom} in $\glhat_m$
and the assumption that the $\glhat_m\ts$-module~$V$ has level $\ka-m\,$,
in the case of $j<0$ the above displayed expression equals
$$
\frac1m\,\ts\de_{cd}\,
(m-\ka)\,j\,x_1^{\,j-1}\ot\id^{\,\ot N}\ot\id
$$
which has to be then subtracted from \eqref{3.8}. 
By linearity, this modification has no effect
when considering the commutator
$[\,Y_1\,,\th\ts(P)\,]$ for any $P\in\q\,$.
Hence for the modified operator $Y_1\,$, the
$[\,Y_1\,,\th\ts(P)\,]$ still belongs to the right ideal of
$\End W$ generated by $\th\ts(Q)$ for some $Q\in\q\,$.




\begin{thebibliography}{[10]}

\bibitem[1]{AST}
{T.\,Arakawa, T.\,Suzuki and A.\,Tsuchiya},
\textit{Degenerate double affine Hecke algebras and conformal field theory},
{Progress Math.}
\textbf{160}
(1998),
1--34.


\bibitem[2]{C1}
{I.\,Cherednik},
\textit{A unification od Knizhnik-Zamolodchikov and Dunkl operators
via affine Hecke algebras},
{Invent. Math.}
\textbf{106} (1991), 411--431.

\bibitem[3]{C2}
{I.\,Cherednik},
\textit{Lectures on Knizhnik-Zamolodchikov equations and Hecke algebras},
{Math. Soc. Japan Memoirs}
{\bf 1}
(1998),
1--96.

\bibitem[4]{D1}
{V.\,Drinfeld},
\textit{Degenerate affine Hecke algebras and Yangians},
{Funct. Anal. Appl.}
\textbf{20} (1986), 56--58.

\bibitem[5]{D}
C.\,F.\,Dunkl, 
\textit{Differential-difference operators associated to reflection groups},
{Trans. Amer. Math. Soc.}
\textbf{311} (1989), 
167--183.

\bibitem[6]{EG}
{P.\,Etingof and V.\,Ginzburg}, 
\textit{Symplectic reflection algebras, Calogero-Moser space, 
and deformed Harish-Chandra homomorphism}, 
{Invent. Math.}
\textbf{147} 
(2002), 
243--348.

\bibitem[7]{KN1}
{S.\,Khoroshkin and M.\,Nazarov},
\textit{Yangians and Mickelsson algebras I\/},
Transformation Groups 
\textbf{11}
(2006), 
625--658. 

\bibitem[8]{L}  
G.\,Lusztig,
\textit{Affine Hecke algebras and their graded version},
{J. Amer. Math.Soc.}
{\bf2} 
(1989),   
599--635.

\bibitem[9]{R}
{J.\,Rogawski},
\textit{On modules over the Hecke algebra of a $p$-adic group},
Invent. Math. 
{\bf 79} 
(1985), 
443--465.

\bibitem[10]{S0}
{T.\,Suzuki},
\textit{Rogawski's conjecture on the Jantzen filtration for the degenerate
affine Hecke algebra of type $A$},
{Electronic J. Representation Theory}
\textbf{2}
(1998), 
393--409.

\bibitem[11]{S1}
{T.\,Suzuki},
\textit{Rational and trigonometric degeneration of the 
double affine Hecke algebra of type A\,},
Int. Math. Res. Notices (2005), 
2249--2262.

\bibitem[12]{S2}
{T.\,Suzuki},
\textit{Double affine Hecke algebras, conformal coinvariants 
and Kostka polynomials},
C. R. Acad. Sci. Paris 
{\bf I\,343} (2006),
383--386.

\end{thebibliography}
\end{document}